\newcommand{\esp}[2]{\mathbb{E}_{#1}\left(#2\right)}
\newtheorem{thm}{Theorem}[subsection]
\newtheorem{lema}[thm]{Lemma}
\newtheorem{rem}[thm]{Remark}
\newtheorem{ex}[thm]{Example}
\author{ Ana K. Fermin  (Modal'X, Universit\'e Paris Nanterre, France)\\
	and\\ Carenne Lude\~na ( Universidad Jorge Tadeo Lozano, \\ Dpto. de Ciencias B\'asicas y Modelado, Bogot\'a)}
\date{January 2018}
\title{Probability bounds for active learning  \\in the regression problem}
\begin{document}
	\maketitle	
	\label{duc1}
		
	
\begin{abstract}
	In this article we consider the problem of active learning in the regression setting. That is,  choosing an optimal sampling scheme for the regression problem simultaneously
	with that of model selection. We consider a batch type approach and an on--line approach adapting algorithms  developed
	for the classification problem. Our main tools are concentration--type inequalities which allow us to bound the supreme of
	the deviations of the sampling scheme corrected by an appropriate weight function. 
\end{abstract}
	

\section{Introduction}

Consider  the following regression model
\begin{equation}\label{eq:1}
y_i=x_{0}(t_{i})+\varepsilon_i, \, i=1,\ldots,n
\end{equation}
where the observation noise $\varepsilon_i $ are i.i.d.
realizations of a certain random variable $\varepsilon$. 

The problem we consider in this article is that of estimating the real-valued function $x_{0}$ based on $t_1,\ldots, t_n$ and a subsample of size $N<n$ of the observations
$y_{1},\ldots,y_{n}$ measured at a well chosen subsample of $t_1,\ldots,t_n$. This is relevant when,
for example, obtaining the values of $y_i$ for each sample point
$t_i$ is expensive or time consuming.

In this article we propose  a statistical regularization approach
for selecting a good  subsample of the data in this regression setting  by introducing a
weighted sampling scheme (importance weighting) and an appropriate
penalty function over the sampling choices.

We begin by establishing basic results  for a fixed model, and then the problem of model selection and choosing a good
sampling set simultaneously. This is what is known
as active learning. We will develop two approaches. The first, a
batch approach  \citep[see for example][]{Sugiyama--Rubens}, assumes the sampling set is
chosen all at once, based on the minimization of a certain penalized
loss function for the weighted sampling scheme.  The second, an iterative approach
\citep{Beygelzimer}, considers  a two step iterative method choosing
alternatively the best new point to be sampled and the best model
given the set of points.

The weighted sampling scheme requires each data point $t_i$ to be sampled with a certain probability $p(t_i)$ which is assumed to be inferiorly bounded by a certain constant $p_{min}$. This constant plays an important role because it controls the expected sample size $\esp{}{N}=\sum_{i=1}^n p(t_i)>np_{min}$. However, it also is inversely proportional to the obtained error terms in the batch procedure (see Theorems \ref{teoconsistencia1}, \ref{teoconsistencia2} and the results in Section  \ref{subsection3} ), so choosing $p_{min}$ too small will lead to poor bounds. Thus essentially, the batch procedure aims at selecting the best subset of data points (points with high probability) for the user chosen error bound. In the iterative procedure this problem is addressed by considering a sequence of sampling probabilities $\{p_j\}$ where at each step  $j$   $p_j(t_i)$ is chosen  to be as big as the greatest fluctuation for this data point over the  hypothesis model for this step.

Following the active learning literature for the regression problem based on ordinary least squares (OLS) and weighted least squares learning (WLS)  \cite[see, for example,][and the references therein]{Sugiyama--Rubens, Sugiyama,Sugiyama1}  in this article we deal mainly with a linear regression setting and a quadratic loss function. This will be done by
fixing a spanning family $\{\phi_{j}\}_{j=1}^m$   and considering the
best $L^2$ approximation $x_{m}$ of $x_{0}$ over this family.
However, our approach is  based on empirical error minimization techniques and can be readily extended to consider other models whenever bounds in probability are available for the error term. With this in mind we have included a more general formulation for bounded noise models in Section \ref{subsection3}.

Our results are based on concentration--type
inequalities. Although variance
minimization techniques for choosing appropriate sub--samples are a
well known tool, giving adequate bounds in probability
and allow dealing with more general non finite model spaces. This is also true for the iterative procedure, where our results generalize previous ones obtained only in the classification setting for finite model spaces.

The article is organized as follows. In Section  \ref{prelim} we formulate the
basic problem and study the batch approach for simultaneous sample and
model selection. In Section  \ref{subsection4}  we study the iterative approach to
sample selection and we discuss effective sample size
reduction. Section \ref{apen} is devoted to  proofs of the main
results and technical lemmas.

\section{Preliminaries}\label{prelim}

\subsection{Basic assumptions} \label{subsection0}

We assume that  the observations noise $\varepsilon_i$ in (\ref{eq:1})  are i.i.d. realizations  of a
random variable $\varepsilon$ satisfying the moment condition
\begin{description}
\item[MC] Assume the r.v. $\varepsilon$ satisfies ${\rm I\!E}
\varepsilon=0,$ ${\rm I\!E} (|\varepsilon|^r/\sigma^r) \le r ! /2$
for all $r>2$  and ${\rm I\!E} (\varepsilon^2)=\sigma^2$.
\end{description}
It is important to stress that the observations depend on a fixed design $t_1, \ldots, t_n$. For this,  we  need some notation concerning this design.

For any functions $u$ and $v$, we define the empirical norm of $u$ and its empirical scalar product with $u$ by
 $$\| u \|_{n}^2=\frac{1}{n} \sum_{i=1}^n (u(t_i))^2, \quad\text{and}\quad
<u,v>_n=\frac{1}{n} \sum_{i=1}^n u(t_i) v(t_i).$$

With the above notation,   given any positive
function $r$, we also introduce   the empirical $r$--scalar product
$<u,v>_{n,r}=\frac{1}{n} \sum_{i=1}^n r(t_i)u(t_i) v(t_i)$ and
$\|u\|_{n,r}$  the associated empirical norm.

We also require the empirical max-norm
$$\| u \|_{\infty}=\max_i |u(t_i)|.$$

With a slight abuse of notation, for any vectors $u$ and $v$, we denote in the same way the normalized norm and the normalized scalar product
with $u$
 $$\| u \|_{n}^2=\frac{1}{n} \sum_{i=1}^n (u_i)^2 ,\quad\text{and}\quad<u,v>_n=\frac{1}{n} \sum_{i=1}^n u_i v_i.$$
If $u$ is a function and $v$ a vector, we can thus use the following scalar product:
$$
<u,v>_n=\frac{1}{n} \sum_{i=1}^n u(t_i) v_i.
$$
which amounts to identifying $u$ and $(u(t_i))_{i=1}^{n}$.

Finally, we also require some notation concerning the spectral norm of a matrix.  

For a matrix $A$ we denote $A^t$ its transpose. With this notation, we define the spectral norm of $A$ by $$\|A\|=\max_{\|v\|=1} \|Av\|=\sqrt{\rho(A^tA)}$$ where  $\rho(A^tA)$ stands for the spectral radius of $A^tA$.


\subsection{Discretization scheme}\label{subsection1}
To start with we  will consider the approximation of function $x_{0}$ over a
finite--dimensional subspace $S_m$. This subspace will be assumed to
be linearly spanned  by the set  $\{\phi_{j}\}_{j\in \mathcal{I}_m}\subset
\{\phi_{j}\}_{j\ge 1} $, with $\mathcal{I}_m$ a certain index set. Moreover, we shall, in general, be interested only in the vector $(x_0(t_i))_{i=1}^{n}$ which we shall typically denote just  by $x_0$ stretching notation slightly.

Over the spanning family $\{\phi_j\}$, we will assume the following properties hold

\begin{description}
\item[AB] There exists an increasing sequence $c_{m}$ such that
$\|\phi_{j}\|_{\infty}\le c_{m}$ for $j\le m$.
\end{description}

\begin{description}\item[AQ] There exists a certain density $q$ and  a positive
constant $Q$ such that $q(t_i)\le Q, i=1,\ldots,n$ and
$$\int \phi_{l}(t)\phi_{k}(t)q(t)\, dt=\delta_{k,l},$$
where
$$\delta_{k,l}=\left\{%
\begin{array}{ll}
    1, & \hbox{ if\, $k=l$;} \\
    0, & \hbox{if not.} \\
\end{array}
\right.$$
\end{description}
\begin{sloppypar}

We will also require the following discrete approximation
assumption. Let $G_{m}=[\phi_{j}(t_{i})]_{{i,j}}$ be the associated empirical
$n\times m$ Gram matrix (design matrix). We assume that $G_{m}^{t}D_{q}G_{m}$ is invertible and moreover that
$\frac{1}{n}G_{m}^{t}D_{q}G_{m}\to I_{m}$, where $D_{q}$ is the
diagonal matrix with entries $q(t_{i})$, for $i=1,\ldots,n$ and  $I_m$  the identity matrix of size $m$.

More precisely, we will assume 
\end{sloppypar}
\begin{description}
\item[AS] There exist
positive constants $\alpha$ and $c$, such that  $$
\|I_m-\frac{1}{n}G_{m}^{t}D_{q}G_{m} \| \le c n^{-1-\alpha}.$$
\end{description}

 Given [AQ], assumption [AS]   is a numerical approximation condition which is satisfied under certain regularity assumptions over $q$ and $\{\phi_j\}$. To illustrate this condition we include the next examples.

 \begin{ex}
 Real trigonometric polynomials: let  $t\in [0,1]$, $\phi_{0}(t)=1$, and for $k \in \mathbb{N}$ set

  $$
  \begin{array}{rl}
       \phi_{2k}(t) &= \sqrt{2} \cos(2\pi kt),  \quad k \geq 1 \\
     \phi_{2k+1}(t) &= \sqrt{2} \sin(2\pi kt), \quad k \geq 0 \\
  \end{array}
  .$$ 
 We assume $q(t)=\mathbf{1}_{[0,1]}(t)$,  and $S_m$ is the linear space spanned by the functions $\phi_k(t), k\leq m$. Then $c_m=\sqrt{2}$ and condition [AS] is  satisfied.
 \end{ex}
 \begin{ex}
 Haar Wavelets: let
  $\phi(t)=\mathbf{1}_{[0,1]}(t)$,  $\psi(t)=\phi(2t)-\phi(2t-1)$  \citep[see for
   example][]{LNS129}, with $q(t)=\mathbf{1}_{[0,1]}(t)$.  Define
 $$\begin{array}{rcl}
 \phi_{j,k}(t)& = & 2^{j/2}\phi(2^jt-k) \, , \; t \in [0,1]
 \, , \; j \geq 0 \, \textrm{and } k \in \mathbb{Z} \; ;\\
 \psi_{j,k}(t)& = & 2^{j/2}\psi(2^jt-k) \, , \; t \in [0,1] \,
 , \; j \geq 0 \, \textrm{and } k \in \mathbb{Z} \; .
 \end{array}$$
 For all $m \geq 0$, $S_m$ denotes the linear space spanned by the functions $(\phi_{m,k}, k \in \mathbb{Z})$. In this case $c_m\le2^{m/2}$ and condition [AS] is satisfied for the discrete sample $t_i=i/2^m$, $i=0,\cdots, 2^{m-1}$.
 \end{ex}

We will denote by $\hat{x}_{m} \in S_{m}$ the function that
minimizes the weighted norm $\|x-y\|^2_{n,q}$ over $S_{m}$ evaluated at points $t_1,\ldots,t_n$. This is,
\begin{equation}
\hat{x}_{m}=\mbox{arg}\min_{x\in S_{m}}\frac{1}{n}\sum_{i=1}^n q(t_i)
(y_i-x(t_i))^2=R_{m}y,\nonumber
\end{equation} with
$R_{m}=G_{m}(G_{m}^{t}D_{q}G_{m})^{-1}G_{m}^{t}D_{q}$  the
orthogonal projector over $S_m$ in the $q$--empirical  norm
$\|\cdot\|_{n,q}$.

Let $x_{m}:=R_{m}x_{0}$   be the projection  of $x_{0}$ over $S_{m}$
in the q--empirical norm $\|\cdot\|_{n,q}$, evaluated at points $t_1,\ldots,t_n$. Our goal is to choose a
good subsample of the data collection such that the estimator of the
unobservable vector $x_0$ in the finite--dimensional subspace
$S_m$, based on this subsample, attains near optimal error bounds. For
this we must introduce the notion of subsampling scheme and
importance weighted approaches  \cite[see][]{Beygelzimer,Sugiyama--Rubens}, which we discuss below.

\subsection{Sampling scheme and importance weighting}\label{subsection1.2}

In order to sample the data set we will introduce a sampling probability $p(t)$ and a sequence of
Bernoulli($p(t_{i})$) random variables $w_{i},\, i=1,\ldots,n$
independent  of $\varepsilon_{i}$ with $p(t_{i})>p_{\min}$. Let
$D_{w,q,p}$  be the diagonal matrix with entries
$q(t_{i})w_{i}/p(t_{i}) $. So that $\esp{}{D_{w,q,p}}=D_{q}$.
Sometimes it will be more convenient to rewrite $w_i=\mathbf{1}_{u_i<p(t_i)}$
for $\{u_i\}_i$ an i.i.d. sample of uniform random variables,
independent of $\{\varepsilon_i\}_i$ in order to stress the
dependence on $p$ of the random variables $w_i$.

The next step is to construct an estimator for  $x_m = R_m x_0$,
based on the observation vector $y$ and the sampling scheme $p$.
For this, we consider a
modified version of the estimator $\hat{x}_m$.

Consider a uniform random sample $u_1,\ldots,u_n$ and let $w_i=w_i(p)=\mathbf{1}_{u_i<p(t_i)}$ for a given $p$. For the given realization of $u_1,\ldots, u_n$, $D_{w,q,p}$ will be  strictly positive for those $w_i=1$. Moreover, as follows from the singular value decomposition, the matrix $(G_{m}^{t}D_{w,q,p}G_{m})$ is invertible as long as at least one $w_i\ne 0$.  

Set $R_{m,p}=G_{m}(G_{m}^{t}D_{w,q,p}G_{m})^{-1}G_{m}^{t}D_{w,q,p}$.  Then $R_{m,p}$ is
the orthogonal projector over $S_m$ in the $wq/p$--empirical  norm
 $\|\cdot\|_{n,wq/p}$ and it is well defined if  at least one $w_i\ne 0$. If all $w_i=0$ the projection is defined to be $0$.

 As the approximation of $x_m$, we then consider (for a fixed $m$, $p$ and $(u_1,\ldots,u_n)$ ) the random quantity
\begin{align} \nonumber
\hat{x}_{m,p} &=\mbox{arg}\min_{x\in S_{m}}
\|x-y\|^2_{n,\frac{qw}{p}}\\ \nonumber &=\mbox{arg}\min_{x\in
S_{m}}\frac{1}{n}\sum_{i=1}^n \frac{w_i}{p(t_i)}q(t_i)(y_i - x(t_i))^2
.\nonumber
\end{align}

\begin{equation}\label{eq:RIWE}
\hat{x}_{m,p}=R_{m,p}y,
\end{equation}
  Note that this
estimator depends on  $y_i$ only if $w_i=1$. However,  as stated above, this depends on $p(t_i)$ for the given probability $p$.

\subsection{Choosing a good sampling scheme}\label{subsection1.3}
To begin with, given $n$, we will assume  that 
$S_m$ is fixed with dimension $|\mathcal{I}_m|=d_m$ and $d_m=o(n)$ (in general $m$ is assumed constant). Remark that the bias $\|x_{0}-x_{m}\|^2_{n,q}$ is independent of $p$ so for our purposes it is only necessary to study  the
approximation error $\|x_{m}-\hat{x}_{m,p}\|^2_{n,q}$ which does depend on how the sampling probability $p$ is chosen.

Let $\mathcal{P}:=\{p_{k}, k\ge 1 \}$
be a numerable collection of   $[0,1]$ valued functions over $\{t_1,\ldots,t_n\}$. Set $p_{k,min}=\min_i p_{k}(t_{i})$. We will assume  that $\min_{k}p_{k,min}>p_{\min}$. The way the candidate probabilities are ordered is not a major issue, although in practice it is sometimes convenient to incorporate prior knowledge (certain sample points are known to be needed in the sample for example) letting favorite candidates appear first in the order. To get the idea of  what a sampling scheme may be, consider the following toy example:

  \begin{ex}
  Let  $\Pi=\{0.1, 0.4, 0.6, 0.9 \}$ and  set
 $$\mathcal{P} = \{ p,  p(t_i)=\pi_j \in \Pi, i=1,\ldots,n \}$$
 which is a set of $|\Pi|^{n}$ functions. In this example, any given $p$ will tend to favor the appearance of points $t_i$ with  $p(t_i)=0.9$ and disfavor the appearance of those $t_i$ with $p(t_i)=0.1$.
\end{ex}
A good sampling scheme $p$, based on the data, should be the minimizer over $\mathcal{P}$ of the non
observable quantity $ \| x_m-\hat{x}_{m,p}\|^2_{n,q}.$ In order to find a reasonable observable equivalent  we start by writing,
\begin{eqnarray}\label{eq:descerror}
&&[\hat{x}_{m,p}-x_m] \nonumber\\
&&=R_{m,p}[x_{0}-x_{m}]+R_{m,p}\varepsilon \nonumber \\&&
=\esp{}{R_{m,p}}[x_{0}-x_{m}]+(R_{m,p}-\esp{}{R_{m,p}})[x_{0}-x_{m}]+ R_{m,p}\varepsilon.
\end{eqnarray}

For the first equality we have used that
\begin{eqnarray*} R_{m,p}x_{m}&=&G_{m}(G_{m}^{t}D_{w,q,p}G_{m})^{-1}G_{m}^{t}D_{w,q,p}G_{m}(G_{m}^{t}D_{q}G_{m})^{-1}G_{m}^{t}D_{q}x_0\\&&
=G_{m}(G_{m}^{t}D_{q}G_{m})^{-1}G_{m}^{t}D_{q}x_0=x_m.
\end{eqnarray*}
Consider first the deterministic term $\esp{}{R_{m,p}}[x_{0}-x_{m}]$ in (\ref{eq:descerror}). We have the next lemma which is proved in the Appendix
\begin{lema}\label{lema3--1}
Under condition [AS]  if $m=o(n)$, then
$$\|\esp{}{R_{m,p}}[x_0-x_m]\|_{n,q}=O(\frac{n^{-1-\alpha}\|x_0-x_m\|_{n,q}}{ p_{min}}).$$
\end{lema}

From Lemma \ref{lema3--1}, we can derive  that the deterministic term is small with respect to the other terms. Thus, it is sufficient for a good sampling scheme to take into account the second and third terms in  (\ref{eq:descerror}).  We propose to use  an upper bound with high probability of those two last terms as in a penalized estimation scheme and to base our choice on this bound.

Define, 

\begin{equation}\label{pen1-1}\widetilde{B}_1(m,p_{k},\delta)= \|x_0-x_m\|^2_{n,q} (\widetilde{\beta}_{m,k}(1+\widetilde{\beta}_{m,k}^{1/2}))^2\end{equation}
with
\begin{equation}\label{deltam}\widetilde{\beta}_{m,k}=\frac{c_m(\sqrt{17}+1)}{2}\sqrt{\frac{d_m Q}{n p_{k,\min}}}\sqrt{2\log(2^{7/4}d_mk(k+1)/\delta)}.\end{equation}
The second square root appearing in the definition of $\widetilde{\beta}_{m,k}$ is included in order to give uniform bounds over the numerable collection $\mathcal{P}$. 

In the following,  the expression  $\mbox{tr}(A)$ stands for the trace of the matrix $A$. 
Set $T_{m,p_{k}}=\mbox{tr}((R_{m,p_{k}}D_q^{1/2})^t R_{m,p_{k}}D_q^{1/2})$ and define
\begin{eqnarray} \widetilde{B}_{2}(m,p_{k},\delta)&=& \sigma^{2}r (1+\theta_{k})\frac{T_{m,p_{k}}+Q}{n}
+ \sigma^{2} Q\frac{log^2(2/\delta)}{ d
	n}, \label{pen2}\end{eqnarray} with $r>1$ and  $d=d(r)<1$ a positive constant
that depends on $r$.  The sequence  $\theta_k\ge 0$  is such that
the Kraft condition $\sum_{k}e^{-\sqrt{dr \theta_{k}(d_m+1) }} < 1$
holds.

It is thus reasonable,
to consider the best $p$ as the minimizer
\begin{equation}\label{bestsampling}
\hat{p}= \underset{p_k \in \mathcal{P}}{\rm{argmin }} \widetilde{B}(m,p_k,\delta,\gamma,n),
\end{equation} where, for a given $0<\gamma<1$,
$$\widetilde{B}(m,p_k,\delta,\gamma,n)=\{(1+\gamma)\widetilde{B}_1(m,p_k,\delta) + (1+1/\gamma) \widetilde{B}_2(m,p_k,\delta)\}.$$

The different roles of $\widetilde{B}_1$ and $\widetilde{B}_2$ appear in the following lemmas:
\begin{sloppypar}
	\begin{lema}\label{lema03}
		Assume that the conditions [AB], [AS], and [AQ] are satisfied and that there is
		a constant $p_{min}
		>0$ such that  for all $i=1,\ldots,n$, $p (t_i)
		> p_{k,\min}>p_{\min}$.  Assume $\widetilde{B}_1$ to be selected according to (\ref{pen1-1}).
		Then for all $\delta>0$ we have
		\begin{eqnarray*}
			&&P\bigg[\sup_{\mathcal{P}}\{\|(R_{m,p}-\esp{}{R_{m,p}})[x_0-x_m]\|^2_{n,q} - \widetilde{B}_1(m,p,\delta)\}>0 \bigg]\le
			\delta/2
		\end{eqnarray*}
	\end{lema}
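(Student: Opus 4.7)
Set $z := x_0 - x_m$. Since $R_{m,p} x_m = x_m$, writing $M := \tfrac{1}{n} G_m^t D_{w,q,p} G_m$ and $v := \tfrac{1}{n} G_m^t D_{w,q,p} z$, one has the identity $R_{m,p} z = G_m M^{-1} v$. The crucial observation is that because $x_m$ is the orthogonal projection of $x_0$ in the $\|\cdot\|_{n,q}$ norm, $\esp{}{v} = \tfrac{1}{n} G_m^t D_q z = 0$; moreover $A := \esp{}{M} = \tfrac{1}{n} G_m^t D_q G_m$ satisfies $\|A - I_m\|_\rho \le c_2 n^{-1-\alpha}$ by [AS], so $A^{-1}$ has spectral norm close to $1$, and [AS] also gives $\|G_m \xi\|_{n,q}^2 \le (1 + c_2 n^{-1-\alpha}) \|\xi\|^2$. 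It therefore suffices to control $\|M^{-1} v - \esp{}{M^{-1} v}\|$ in Euclidean norm.

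Using the resolvent identity $M^{-1} - A^{-1} = -M^{-1}(M-A) A^{-1}$ I would decompose
\begin{equation*}
M^{-1} v - \esp{}{M^{-1} v} = A^{-1} v - \bigl( M^{-1}(M-A) A^{-1} v - \esp{}{M^{-1}(M-A) A^{-1} v} \bigr).
\end{equation*}
The first summand is a centred sum of $n$ independent bounded vectors (using $\esp{}{v} = 0$). Writing each coordinate $v_j$ as a Bernoulli-weighted empirical average and exploiting [AB], [AQ] and $p(t_i) > p_{\min}$, one obtains the variance bound $\mathrm{Var}(v_j) \le c_m^2 Q \|z\|_{n,q}^2 / (n p_{\min})$ together with a matching sup bound. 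Bernstein's inequality plus a union bound over the $m$ coordinates produces $\|A^{-1} v\| \le \widetilde{\beta}_{m,k(p)} \|z\|_{n,q}$ on an event of probability at least $1 - \delta/(4 k(p)(k(p)+1))$; the numerical constant $(\sqrt{17}+1)/2$ in (\ref{deltam}) falls out of resolving the quadratic-in-deviation Bernstein tail. The correction summand is handled by an analogous entrywise Bernstein concentration for $\|M - A\|_\rho$; on the same event $\|M^{-1}\|_\rho$ is bounded by a small constant, and combined with $\|v\| \lesssim \widetilde{\beta}_{m,k(p)} \|z\|_{n,q}$ this yields a contribution of order $\widetilde{\beta}_{m,k(p)}^{3/2} \|z\|_{n,q}$, which is exactly the second piece of the factor $\widetilde{\beta}(1 + \widetilde{\beta}^{1/2})$ in (\ref{pen1-1}).

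The last step is the union bound over $p \in \mathcal{P}$. Since $\sum_{k \ge 1} 1/(k(k+1)) = 1$, assigning deviation probability $\delta/(4 k(p)(k(p)+1))$ at index $k(p)$ gives total slack $\delta/2$, accounting for the $k(p)(k(p)+1)$ factor inside the logarithm defining $\widetilde{\beta}_{m,k(p)}$.

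I expect the main obstacle to be the coupling between $M^{-1}$ and $v$: one cannot write $\esp{}{M^{-1} v}$ as a product of expectations, so naive centring fails. The resolvent expansion shifts the dependence into a higher-order term, but carrying the constants through the Bernstein quadratic (which is what produces the $\sqrt{17}$) and verifying simultaneously that the event $\{\|M-A\|_\rho \text{ small}\}$ has probability comparable to the event controlling $\|v\|$ is the delicate bookkeeping that drives the explicit form of (\ref{deltam}).
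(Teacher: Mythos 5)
Your decomposition is genuinely different from the paper's, and your starting observation is correct and even unexploited there: since $x_m=R_mx_0$ is the $\|\cdot\|_{n,q}$-projection, $\esp{}{v}=\tfrac1nG_m^tD_q(x_0-x_m)=0$. The paper instead never isolates $v$; it bounds the operator norm $\|R_{m,p}-\esp{}{R_{m,p}}\|_{\rho}$ by writing $R_{m,p}=\tfrac1n\tilde E^t\tilde E$ with $\tilde E=A_{m,p}^{-1/2}G_m^tD_{w,q,p}^{1/2}$ and applying a Rudelson-type matrix deviation bound (Lemma~\ref{lemarauhut}) twice --- once to $A_{m,p}=\tfrac1nG_m^tD_{w,q,p}G_m$, to control $\|A_{m,p}^{-1}\|_\rho$ via a Neumann series and hence the rows of $\tilde E$, and once to $\tilde E\tilde E^t$ --- so that $x_0-x_m$ enters only through the factor $\|x_0-x_m\|_{n,q}$. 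The main gap in your route is the coordinatewise Bernstein step. The summands of $v_j$ are $\tfrac1n\phi_j(t_i)q(t_i)(x_0-x_m)(t_i)(w_i/p_i-1)$, so the range term in Bernstein's inequality is governed by $\max_i|(x_0-x_m)(t_i)|$. Nothing in the hypotheses of Lemma~\ref{lema03} controls this: [AB] bounds only $\|\phi_j\|_\infty$, [AQ] gives only an \emph{upper} bound on $q$ (so $\max_i|(x_0-x_m)(t_i)|$ cannot be recovered from $\|x_0-x_m\|_{n,q}$ without a spurious $\sqrt n$), and the uniform bound $e_m=\|x_0-x_m\|_\infty\le C$ is only assumed later, in the model-selection section. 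Hence your ``matching sup bound'' does not exist under the stated assumptions, and the purely sub-Gaussian form of $\widetilde{\beta}_{m,k}$ in (\ref{deltam}) (a single $\sqrt{\log}$ factor) cannot be reached this way. The paper's operator-norm route avoids this precisely because the row-norm hypothesis of Lemma~\ref{lemarauhut} involves only the $\phi_j$'s.

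Two further problems with the correction term. First, after centring you must bound $\|\esp{}{M^{-1}(M-A)A^{-1}v}\|$; a high-probability bound on $M^{-1}(M-A)A^{-1}v$ does not control its expectation, since $\|M^{-1}\|_\rho$ is not bounded almost surely on the complementary event. You would need genuine moment bounds, which is exactly what the $E_r^{1/r}$ part of Lemma~\ref{lemarauhut} is used for in the proof of Lemma~\ref{lema3--1}. Second, even granting all the concentration, the resolvent correction is of order $\|M^{-1}\|_\rho\|M-A\|_\rho\|A^{-1}\|_\rho\|v\|\lesssim\widetilde{\beta}_{m,k}^{2}\|x_0-x_m\|_{n,q}$, not $\widetilde{\beta}_{m,k}^{3/2}\|x_0-x_m\|_{n,q}$. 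That is harmless when $\widetilde{\beta}_{m,k}\le1$, but it is not ``exactly'' the second piece of (\ref{pen1-1}): the $\widetilde{\beta}^{3/2}$ there arises from the $(1+\widetilde{\beta}_{m,k})^{1/2}$ inflation of the row bound of $\tilde E$ after inverting $A_{m,p}$, a mechanism absent from your argument.
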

\end{sloppypar}

\begin{lema}\label{lema04} Assume the observation noise in equation
	\eqref{eq:1} is an i.i.d. collection of random variables satisfying
	the moment condition [MC]. Assume that the condition [AQ] is satisfied and assume  that there is a constant
	$p_{min}
	>0$ such that $p (t_i)
	> p_{min}$ for all $i=1,\ldots,n$. Assume $\widetilde{B}_2$ to be selected according to (\ref{pen2}) with $r>1$, $d=d(r)$
	and $\theta_k \geq 0$, such that  the following Kraft inequality $\sum_{k}e^{-\sqrt{d r \theta_{k}(m+1) }} < 1$ holds.  Then,
	\begin{eqnarray*}
		&& P(\sup_{\mathcal{P}}\{\|R_{m,p}\varepsilon\|^2_{n,q}- \widetilde{B}_2(m,p,\delta)\}>0)<\delta/2.
	\end{eqnarray*}
\end{lema}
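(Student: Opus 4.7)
The plan is to view $\|R_{m,p}\varepsilon\|_{n,q}^{2}$ as a quadratic form in the sub-exponential noise vector $\varepsilon$ (conditional on the sampling weights $w$), apply a Bernstein-type inequality for such quadratic forms, and then sum the resulting deviation bounds over $p\in\mathcal{P}$ using the Kraft condition on $\{L_k\}$.

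Concretely, I would write $\|R_{m,p}\varepsilon\|_{n,q}^{2}=\varepsilon^{T}B_{p}\varepsilon$ with $B_{p}:=R_{m,p}^{T}D_{q}R_{m,p}/n$. Conditional on $w$, the matrix $B_{p}$ is deterministic, symmetric, positive semi-definite, and of rank at most $m$. The moment condition [MC] is exactly the Bernstein assumption with scale $\sigma$, so the standard Bernstein bound for quadratic forms of independent sub-exponential variables yields, for every $t>0$,
\[
P\bigl(\varepsilon^{T}B_{p}\varepsilon \ge \sigma^{2}\operatorname{tr}(B_{p}) + 2\sigma^{2}\sqrt{t\,\|B_{p}\|_{F}^{2}} + 2\sigma^{2}\|B_{p}\|_{\rho}\,t \,\big|\, w\bigr) \le e^{-t}.
\]
I would then split the middle cross term via $2\sqrt{ab}\le \eta a+\eta^{-1}b$ with $\eta$ chosen to produce a multiplicative factor $r$ in front of the mean-like piece and a $\log^{2}$ residual, which is where the constant $d=d(r)$ is born. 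The three matrix functionals are controlled using [AQ] ($q\le Q$), the rank-$m$ projection structure of $R_{m,p}$ in the $wq/p$-inner product, and the operator bound $\|A_{m,p}^{-1}\|_{\rho}=O(1)$ already established in the proof of Lemma~\ref{lema03}: on that high-probability event one gets $\operatorname{tr}(B_{p})\le Q(m+1)/n$, $\|B_{p}\|_{F}^{2}\lesssim Q\,\operatorname{tr}(B_{p})$ and $\|B_{p}\|_{\rho}\lesssim Q/n$, the $+1$ coming from the deterministic correction associated with [AS] and the bias of $\mathbb{E}\,R_{m,p}$ away from $R_m$.

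To pass to the supremum over $\mathcal{P}$, I would take the deviation level for the $k$-th scheme to be $t_{k}=\log(2/\delta)+\sqrt{d\,r\,L_{k}(m+1)}$, so that the Bernstein bound at level $t_{k}$ exactly matches the two-term structure of $\widetilde{pen}_{2}(m,p,\delta)$. Summing the tail probabilities, the Kraft inequality $\sum_{k}e^{-\sqrt{d\,r\,L_{k}(m+1)}}<1$ delivers a total bound of at most $\delta/2$ after intersecting with the high-probability event from Lemma~\ref{lema03}.

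The main obstacle will be the interplay between the two independent sources of randomness, namely the Bernoulli weights $w$ (through $B_{p}$) and the noise $\varepsilon$: one must uniformly control $\operatorname{tr}(B_{p})$, $\|B_{p}\|_{F}^{2}$ and $\|B_{p}\|_{\rho}$ over the countable family $p\in\mathcal{P}$ on a single good event, and simultaneously calibrate the parameters $\eta$, $r$ and $d(r)$ so that the cross term in the Bernstein bound is absorbed into the leading $r(1+L_{k})(m+1)/n$ piece rather than inflating the logarithmic remainder. Tracking the Kraft weights $\sqrt{dr L_k(m+1)}$ inside the square-root Bernstein regime (rather than the linear one) is the delicate calibration that forces the specific $\log^{2}$ scaling in the second summand of $\widetilde{pen}_{2}$.
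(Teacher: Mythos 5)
Your overall architecture (write $\|R_{m,p}\varepsilon\|^2_{n,q}$ as a quadratic form in $\varepsilon$, apply a concentration inequality conditional on the design, then take a union bound over $k$ calibrated by the Kraft condition) is the same as the paper's, but the concentration inequality you invoke is not valid under [MC], and this is a genuine gap. Condition [MC] only makes $\varepsilon$ sub-exponential, not sub-Gaussian; hence $\varepsilon_i^2$ and $\varepsilon_i\varepsilon_j$ have tails of order $e^{-c\sqrt{x}}$, and the quadratic form $\varepsilon^T B_p \varepsilon$ cannot satisfy a Hanson--Wright/Bernstein bound of the form $P(\varepsilon^TB_p\varepsilon\ge \sigma^2\mathrm{tr}(B_p)+2\sigma^2\sqrt{t}\,\|B_p\|_F+2\sigma^2\|B_p\|_\rho t)\le e^{-t}$ uniformly in $t$: in the far-deviation regime the tail decays only like $\exp(-c\sqrt{t})$. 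The paper instead applies Lemma \ref{lema1} (imported from \cite{Fermin-Ludena}), whose tail is precisely $\exp\{-\sqrt{d\,(u/\rho(A^tA)+rL[\mathrm{Tr}(A^tA)/\rho(A^tA)+1])}\}$; inverting this square-root exponent is what produces both the $\log^2(2/\delta)/(dn)$ summand of $\widetilde{pen}_2$ and the Kraft condition in the form $\sum_k e^{-\sqrt{drL_k(m+1)}}<1$. You sense this at the very end (``square-root Bernstein regime \dots forces the $\log^2$ scaling''), but that observation contradicts the $e^{-t}$ inequality you wrote down: with your inequality the second summand of $\widetilde{pen}_2$ would only need to be of order $\log(2/\delta)/n$, and your choice $t_k=\log(2/\delta)+\sqrt{drL_k(m+1)}$ does not reproduce the leading $r(1+L_k)(m+1)/n$ term either. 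The calibration step is therefore not actually carried out, and it cannot be carried out with the inequality as stated.

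A secondary point: the paper's proof needs none of the auxiliary machinery you introduce. Because $R_{m,p}$ is a projection for every realization of $w$, the bounds $\mathrm{Tr}(\Gamma)\le Qm$ and $\rho(\Gamma)\le Q$ for $\Gamma=(R_{m,p}D_q^{1/2})^TR_{m,p}D_q^{1/2}$ hold deterministically, so Lemma \ref{lema1} applies for each fixed $w$ and the tail bound integrates out; there is no need to control a Frobenius norm, to invoke $\|A_{m,p}^{-1}\|_\rho$, or to intersect with the high-probability event of Lemma \ref{lema03} --- the latter would in any case consume probability that the budget $\delta/2$ does not allow for. Also, the ``$+1$'' in $(m+1)$ comes from the $[\mathrm{Tr}+\rho]$ structure of Lemma \ref{lema1}, not from any bias correction associated with [AS].
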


Those two lemmas together with Lemma \ref{lema3--1}  assure that the proposed estimation procedure, based on the minimization of $\widetilde{B}$, is  consistent establishing non asymptotic rates in probability.

 We may now state the main result of this section, namely, non-asymptotic consistency rates in probability of the proposed estimation procedure. The proof follows from Lemmas~\ref{lema03} and~\ref{lema04} and is given in the Appendix along with the proof of the  lemmas.

\begin{thm}\label{teoconsistencia1} Assume that the conditions [AB], [AS]
and [AQ] are satisfied. Assume $\hat{p}$ to be selected according to (\ref{bestsampling}). Then the
following inequality holds with probability greater than $1-\delta$
\begin{eqnarray*}
	\|x_m-\hat{x}_{m,\hat{p}}\|^2_{n,q}
	&\le& \inf_{p \in \mathcal{P}} 6 \bigg( \|\esp{}{R_{m,p}}(x_m-x_0)\|^2_{n,q} + \widetilde{B}(m,p,\delta,\gamma,n)\bigg).
\end{eqnarray*}
\end{thm}

\noindent Certain remarks are in order.

\begin{rem}
	In the minimization scheme given above it is not necessary to know the term
	$\|x_0-x_m\|^2_{n,q}$ in $\widetilde{B}_1$  as this term is constant with regard to the sampling scheme $p$. Including this term in the definition of $\widetilde{B}_1$,  however, is important because it leads to optimal bounds in the sense that it balances $p_{min}$ with the  mean  variation, over the sample points, of the best possible solution $x_m$ over the hypothesis model set $S_m$. This idea shall be pursued in depth in Section  \ref{subsection4}.
	
	Moreover, minimizing $\widetilde{B}_1$ essentially just requires selecting $k$ such that $p_{k,\min}$ is largest  and doesn't intervene at all if $p_{k,\min}=p_{min}$ for all $k$. Minimization based on $p_k(t_i)$ for all sample points is given by   the  trace $T_{m,p_{k}}$ which depends on  the initial random sample $u$ independent of $\{(t_i,y_i), i=1,\ldots,n\}$. A reasonable strategy in practice, although we do not have theoretical results for it, is to consider several realizations of $u$ and select sample points which appear more often in the selected sampling scheme $\hat{p}$.
\end{rem}

\begin{rem}
	Albeit the appearance of  weight terms which depend on $k$ both in the definition of
	$\widetilde{B}_1$ and $\widetilde{B}_2$,
	actually the ordering of  $\mathcal{P}$ does not play a major role.
	The weights are given in order to assure convergence over the
	numerable collection $\mathcal{P}$. Thus in the definition of
	$\widetilde{\beta}_{m,k}$ any sequence of weights $\theta'_k$
	(instead of $[k(k+1)]^{-1}$) assuring that the series $\sum_k
	\theta'_k<\infty$ is valid. Of course, in practice $\mathcal{P}$ is finite. Hence for $M=|\mathcal{P}|$ a more reasonable bound is just to consider uniform weights $\theta'_k=1/M$ instead.
\end{rem}
\begin{rem}
	Setting
	$H_{m,p_{k}}:=(G_{m}^{t}D_{w,q,p_{k}}G_{m})^{-1}G_{m}^{t}D_{w,q,p_{k}}$ we may write $T_{m,p_{k}}=\mbox{tr}(G_m^tD_qG_m
	H_{m,p_{k}}H_{m,p_{k}}^t) $ in the definition of $\widetilde{B}_2$. Thus our convergence rates are as in  Lemma 1, \cite{Sugiyama}. Our approach however provides non-asymptotic bounds in probability as opposed to asymptotic bounds for the quadratic estimation error.
\end{rem}
\begin{rem}
	As mentioned at the beginning of this section, the expected "best" sample size given $u$  is
	$\hat{N}=\sum_i \hat{p}(t_i)$, where $u$ is the initial random sample independent of $\{(t_i,y_i), i=1,\ldots,n\}$. Of course, a uniform inferior bound for this expected sample size is $\esp{}{\hat{N}}>np_{min}$, so that the expected size is inversely proportional to the user chosen estimation error. In practice, considering several realizations of the initial random sample provides an empirical estimator of the non conditional "best" expected sample size.
\end{rem}

\subsection{Model selection and active learning}\label{subsection2}
\begin{sloppypar}
Given a model and $n$ observations $(t_1,y_1), \ldots,(t_n,y_n)$
 we know how to estimate the best sampling scheme
$\hat{p}$ and to obtain the estimator $\hat{x}_{m,\hat{p}}$. The
problem is that the model $m$ might not be a good one. Instead of
just looking at {\em fixed} $m$ we would like to consider
simultaneous model selection as in \cite{Sugiyama--Rubens}. For this
we shall pursue a more global approach based on loss functions.
\end{sloppypar}

 We start by introducing some notation. Set $l(u,v)=(u-v)^2$ the squared loss and let $L_n(x,y,p)=\frac{1}{n}\sum_{i=1}^n  q(t_i) \frac{w_i}{p(t_i)}l(x(t_i),y_i)$
 be the empirical loss function for the quadratic difference with the given sampling distribution.
  Set $L(x):=\esp{}{L_n(x,y,p)}$ with the expectation taken over all the random variables involved. Let $L_{n}(x,p):=\esp{\varepsilon}{L_n(x,y,p)}$ where $\esp{\varepsilon}{}$ stands for the conditional expectation given the initial random sample $u$, that is the expectation with respect to the random noise $\varepsilon$. It is not hard to see that
  $$L(x)=\frac{1}{n}\sum_{i=1}^n q(t_i) \esp{}{l(x(t_i),y_i)},$$ and
  $$L_{n}(x,p)=\frac{1}{n}\sum_{i=1}^n q(t_i) \frac{w_i}{p(t_i)}\esp{}{l(x(t_i),y_i)}.$$
 Recall that $\hat{x}_{m,p}=R_{m,p}y$ is the minimizer of $L_n(x,y,p)$ over each $S_m$ for given $p$ and  that $x_m=R_{m}x_0$ is the minimizer of $L(x)$ over $S_m$. Our problem is then to find the best approximation of the target
$x_0$ over the  function space $S_0:=\bigcup_{m\in \mathcal{I}} S_m$. In the
notation of Section \ref{subsection1} we assume for each $m$ that
$S_m$ is a bounded subset of the linearly spanned space of the
collection $\{\phi_j\}_{j\in I_m}$ with $|\mathcal{I}_m|=d_m$.

Unlike the fixed $m$ setting, model selection requires controlling not only the variance term  $\|x_m-\hat{x}_{m,p}\|_{n,q}$ but also the unobservable bias term $\|x_0-x_m\|^2_{n,q}$ for each possible model $S_m$.
 If all samples were available this would be possible just by looking at $L_n(x,y,p)$ for all $S_m$ and $p$, but in the active learning setting  labels are expensive.

Set $e_m:=\|x_0-x_m \|_\infty$. In what follows we will
assume that there exists a positive constant $C$ such that  $
\sup_{m} e_m\le C$. Remark this implies $\sup_{m}\|x_0-x_m\|_{n,q}\le QC$, with $Q$ defined in [AQ].

As above $p_{k}\in \mathcal{P}$ stands  for the set of candidate sampling probabilities and $p_{k,\min}=\min_i(p_{k}(t_i))$.

Define
\begin{equation}\label{pen0}
pen_{0}(m,p_k,\delta)= \frac{QC^2}{p_{k,\min}} \sqrt{\frac{1}{2 n }\ln(\frac{6d_m(d_m+1)}{\delta})},
\end{equation}
\begin{equation}\label{pen1new}
pen_1(m,p_k,\delta)= QC \beta^2_{m,k}(1+\beta_{m,k}^{1/2})^2 ,
\end{equation}
with
$$\beta_{m,k}=\frac{c_m(\sqrt{17}+1)}{2}\sqrt{\frac{d_m Q}{n p_{k,\min}}}\sqrt{2\log(\frac{3* 2^{7/4}d_m^2(d_m+1)k(k+1)}{\delta})},$$
and finally setting $T_{p_k,m}=\mbox{tr}((R_{m,p_k}D_q^{1/2})^t R_{m,p_k}D_q^{1/2})$, define
\begin{equation}\label{pen2new}
pen_2(m,p_k,\delta)=
\sigma^{2}\left\{r(1+\theta_{m,k})\frac{T_{p_k,m}+
 Q}{n}+\frac{Q\ln^2(6/\delta)}{d n}\right\}
\end{equation}
where $\theta_{m,k}\ge 0$  is a sequence such that
$\sum_{m,k}e^{-\sqrt{dr \theta_{m,k}(d_m + 1) }}<1$ holds.
 We remark that the
change from $\delta$ to $\delta/(d_m(d_m+1))$ in $pen_0$ and $pen_1$  is required in
order to account for the supremum over the collection
of possible model spaces $S_m$.

Also, we remark that introducing  simultaneous model and sample
selection  results in the inclusion of term $pen_0\sim C^2/
p_{k,\min}*\sqrt{1/n}$ which includes an $L_\infty$ type bound
instead of an $L_2$ type norm which may yield non optimal bounds.
 Dealing more efficiently with  this term would require knowing the (unobservable) bias term $\|x_0-x_m\|_{n,q}$.
A reasonable strategy is selecting  $p_{k,min}=p_{k,min}(m)\ge
\|x_0-x_m\|_{n,q}$ whenever this information is available.

In practice, $p_{k,min}$ can be estimated for each model $m$ using a
previously estimated empirical error over a subsample if this is
possible. However this yields a conservative choice of the bound.
 One way to  avoid this inconvenience is to consider iterative procedures, which update on the unobservable bias term. This course of action shall be pursued in Section \ref{subsection4}.

 With these definitions, for a given $0<\gamma<1$ set
 \begin{eqnarray*}&&pen(m,p,\delta,\gamma,n)\\
 &&=[2p_0(m,p,\delta)+(\frac{1}{p_{\min} }+\frac{1}{\gamma }) pen_1(m,p,\delta)\\
 &&+(\frac{1}{p_{\min}^2 }(\frac{2}{\gamma}+1)+\frac{1}{\gamma})pen_2(m,p,\delta)+2((c+1)\frac{n^{-(1+\alpha)}QC}{p_{min}})^2].
 \end{eqnarray*}
and define
 $$L_{n,1}(x,y,p)=L_n(x,y,p)+pen(m,p,\delta,\gamma,n).$$

 The appropriate choice of an optimal sampling scheme simultaneously with that of model selection is a difficult problem.
We would like to choose simultaneously $m$ and $p$, based on the data in such a way that optimal rates are maintained.  We propose for this a penalized version of $\hat{x}_{m,\hat{p}}$, defined as follows.

 We  start by choosing, for each $m$, the best sampling scheme
\begin{equation}\label{bestsampling1}
\hat{p}(m)=\mbox{arg}\min_{p}{pen}(m,p,\delta,\gamma,n),
\end{equation}
computable before observing the output values $\{y_i\}_{i=1}^n$, and then calculate the estimator
$\hat{x}_{m,\hat{p}(m)}=R_{m,\hat{p}(m)}y$ which was defined in \eqref{eq:RIWE}.

Finally, choose the best model as
 \begin{equation}\label{bestmodel}
 \hat{m}=\mbox{arg}\min_{m}L_{n,1}(y,\hat{x}_{m,\hat{p}(m)},\hat{p}(m)).
 \end{equation}

The penalized estimator is then $\hat{x}_{\hat{m}}:=\hat{x}_{\hat{m},\hat{p}(\hat{m})}$.
It is important to remark that for each model $m$, $\hat{p}(m)$ is independent of $y$ and hence of the random observation error structure.
The following result assures the consistency of the proposed estimation procedure, although the obtained rates are not optimal as observed at the beginning of this section. The proof is given in the Appendix.
\begin{thm}\label{teoconsistencia2}
With probability greater than $1-\delta$, we have
 \begin{eqnarray*}
 &&L(\hat{x}_{\hat{m}})\le \frac{1+\gamma}{1-4\gamma}[  L(x_m)\\
 &&+\min_{m,k}(2 p_0(m,p_k,\delta)+\frac{1 }{p_{\min}}pen_{1}(m,p_k,\delta)\\
 &&+\frac{1}{p_{\min}^2}(1+2/\gamma)pen_2(m,p_k,\delta))]\\
 &&\le\frac{1+\gamma}{1-4\gamma}\min_m [L(x_m)+\min_k pen(m,p_k,\delta,\gamma,n)]
 \end{eqnarray*}
 \end{thm}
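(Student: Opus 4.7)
The plan is a classical penalized oracle inequality for a doubly random weighted empirical risk. The two nested minimizations defining $\hat m$ and $\hat p(m)$ --- $\hat p(m)$ chosen before observing $y$ by minimizing the penalty, and $\hat m$ chosen by minimizing $L_{n,1} = L_n + pen$ --- give immediately
\begin{equation*}
L_n(\hat x_{\hat m, \hat p(\hat m)}, y, \hat p(\hat m)) + pen(\hat m, \hat p(\hat m)) \le L_n(\hat x_{m, \hat p(m)}, y, \hat p(m)) + pen(m, \hat p(m))
\end{equation*}
for every $m$, and $pen(m, \hat p(m)) \le pen(m, P_k)$ for every $k$ because $\hat p(m) = \mbox{argmin}_p\, pen(m, p)$. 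The remainder of the proof converts this empirical inequality into one for the deterministic loss $L$ and finally minimizes in $(m,k)$.

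To transfer from $L_n$ to $L$, I would use the identity
\begin{equation*}
L_n(x, y, p) - L_n(x_0, y, p) = \|x - x_0\|^2_{n, qw/p} - 2\langle x - x_0, \varepsilon\rangle_{n, qw/p}
\end{equation*}
together with $L(x) - L(x_0) = \|x - x_0\|^2_{n, q}$. Replacing $\|\cdot\|^2_{n, qw/p}$ by $\|\cdot\|^2_{n, q}$ costs a Hoeffding deviation on the bounded quantities $q_i(w_i/p_i - 1)(x(t_i) - x_0(t_i))^2 \le QC^2/p_{k,\min}$, which is precisely $pen_0$, the $\delta/(d_m(d_m+1))$ calibration accommodating the union bound over models. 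The cross term $\langle x - x_0, \varepsilon\rangle_{n, qw/p}$ is split with $2ab \le \gamma a^2 + b^2/\gamma$: the $a^2$-piece is absorbed into the loss on each side (producing a $(1+\gamma)$ factor on the right and a $(1-\gamma)$-type factor on the left), while the $b^2$-piece reduces to a $pen_2$-type bound with the weight $qw/p$ pushed onto $\varepsilon$, contributing the $1/p_{\min}^2$ factor multiplying $pen_2$ in the theorem.

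On the right-hand side, where the argument of $L_n$ is the random estimator $\hat x_{m, \hat p(m)}$, I would further decompose $\hat x_{m, p} - x_m$ via equation \eqref{eq:descerror} and bound its three summands in the $q$-empirical norm: Lemma \ref{lema3--1} for the bias of order $n^{-1-\alpha}QC/p_{\min}$ (contributing the additive $2((c+1)n^{-(1+\alpha)}QC/p_{\min})^2$ in $pen$), Lemma \ref{lema03} for the variance of the projection (contributing the $\tfrac{1}{p_{\min}}pen_1$ term), and Lemma \ref{lema04} for the projected noise (contributing the $\tfrac{1+2/\gamma}{p_{\min}^2}pen_2$ term). These bounds already hold uniformly over $p \in \mathcal{P}$ with probability at least $1 - \delta$ after the union bounds built into $pen_1$ and $pen_2$.

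Putting everything together, the net effect of the four or so cross-term splittings is a $(1 - 4\gamma)$ factor on $L(\hat x_{\hat m})$ on the left and a $(1 + \gamma)$ factor on $L(x_m)$ on the right. Dividing through, using $pen(m, \hat p(m)) \le pen(m, P_k)$, and minimizing over $(m, k)$ delivers the first inequality of the theorem; the second is immediate from the explicit form of $pen(m, P_k, \delta, \gamma, n)$. The main obstacle is not any single estimate but the cumulative bookkeeping: the weight $qw/p$ injects a $1/p_{\min}$ factor into every transfer between $L_n$ and $L$, the repeated use of $2ab \le \gamma a^2 + b^2/\gamma$ injects $\gamma$ factors, and these must be arranged so that the final coefficients in front of $pen_0$, $pen_1$, $pen_2$ exactly reproduce those in the statement while leaving the clean $(1 - 4\gamma)$ coefficient of $L(\hat x_{\hat m})$ on the left.
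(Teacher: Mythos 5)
Your proposal follows essentially the same route as the paper: the paper reduces the theorem to Lemma \ref{lema2}, whose two fluctuation terms $\Delta_1$ (the noise cross term $\tfrac{2}{n}\sum q_i\tfrac{w_i}{p_i}\varepsilon_i(x-x')(t_i)$, split with $2ab\le\gamma a^2+b^2/\gamma$ and controlled by Lemma \ref{lema1}) and $\Delta_2$ (the $w/p$-versus-expectation fluctuation, handled by expanding $\hat{x}_{m,p}-x_0$ via \eqref{eq:descerror} and invoking Lemmas \ref{lema3--1}, \ref{lema03}, \ref{lemanorm} and \ref{lema1} term by term) are exactly the two pieces of your decomposition, and your starting inequality from the two nested minimizations is the paper's implicit first step. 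The only part you leave undone --- the constant bookkeeping that produces $\tfrac{1+\gamma}{1-4\gamma}$ and the precise coefficients of $pen_0$, $pen_1$, $pen_2$ --- is also not spelled out in the paper beyond the statement of Lemma \ref{lema2}, so this is not a gap relative to the source.
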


\begin{rem}
 In practice,   a reasonable alternative to the proposed minimization procedure is estimating the overall error by cross--validation or leave one out techniques
  and then choose   $m$ minimizing the error for successive essays of probability $\hat{p}$.  Recall that
  in the original procedure of Section \ref{subsection2}, labels are not required to obtain $\hat{p}$ for a fixed model.
  Cross--validation or empirical error minimization techniques do however  require a stock of ``extra" labels, which might not be
  affordable in the active learning setting.  Empirical error minimization is specially useful for   applications where  what is required is a subset of very informative sample points, as for example when deciding what points  get extra labels   (new laboratory runs, for example) given a first set of complete labels is available.  Applications suggest that $\hat{p}$ obtained with this  methodology
    (or a threshold version of $\hat{p}$ which eliminates points with sampling probability
    $\hat{p}_i\le \eta$ a certain small constant) is very accurate in finding ``good" or informative subsets, over which model selection may be performed.
\end{rem}


\subsection{Error bounds for the general bounded case}\label{subsection3}
\begin{sloppypar}
The above procedure can be extended to other frameworks, defined by
minimization over $S_0=\bigcup S_m$ of a given loss function
$$L_n(x,y,p)=\frac{1}{n}\sum_{i=1}^n q(t_i)
\frac{w_i}{p(t_i)}l(x(t_i),y_i)$$ with expectation
$L(x)=\esp{}{L_n(x,y,p)}=\frac{1}{n}\sum_{i=1}^n q(t_i)\esp{}
{l(x(t_i),y_i)}$. Set, as above,
$L_n(x,p)=\esp{\varepsilon}{L_n(x,y,p)}=\frac{1}{n}\sum_{i=1}^n
q(t_i)\frac{w_i}{p(t_i)}\esp{} {l(x(t_i),y_i)}$. We will denote
$l(x)=\esp{} {l(x,y)}$. In order to repeat the proof of Section
\ref{subsection2} it is necessary to control both the fluctuations
of $D_n:=L_n(x,y,p)-L_n(x',y,p)-[L_n(x,p)-L_n(x',p)]$
   and   $D_n':=L_n(x,p)-L_n(x',p)-[L(x)-L(x')]$. Dealing with the first term $D_n$  requires   bounding
    $$\Delta(m,p):=\sup_{x\in S_m}\frac{1}{n}\sum_{i=1}^n q(t_i)\frac{w_i}{p(t_i)}[l(x(t_i),y_i)-l(x(t_i))].$$
\end{sloppypar}
 Assuming $l(x,y)$ is uniformly bounded by a constant $B^2$ (which is not the case for the example presented in Section \ref{subsection2}) standard arguments allow improving the results given in the last subsection  \cite[see for example][for a very thorough discussion]{essaimsurvey}, although it is never possible in the batch approach setting to eliminate the term $p_{min}$ from the denominator of the error. More precisely,  combining the bounded differences inequality and bounds for Rademacher sums lead to the following bound in probability
 $$P(\Delta(m,p_k)-[t_{m,k}+2\esp{}{\Omega_n(m,k)}]>0)\le \frac{\delta}{k(k+1) m(m+1)},$$ with
 $$t_{m,k}=\sqrt{\frac{4 B^4\log(2 m(m+1)k(k+1)/\delta)}{n p_{\min}^2}}$$ and
 $$\Omega_n(m,k)=\esp{\xi}{\sup_{x\in S_m}\frac{1}{n}\sum_{i=1}^n q(t_i)\frac{w_i}{p(t_i)}\xi_i l(x(t_i),y_i)}$$ where $\xi_i$ is a sequence of independent (Rademacher) random variables, $P(\xi=-1)=P(\xi=1)=1/2$ and independent of $y_i$. A uniform bound over the candidate model spaces and probabilities then yields,
 \begin{equation}\label{cota51} P(\sup_{m,k}[\Delta(m,p_k)-(t_{m,k}+2\esp{}{\Omega_n(m,k)})]>0)<\delta
 \end{equation}
 Dealing with the second term $D_n'$ on the other hand requires bounding
 $$\Delta'(m,k):=\sup_{x\in S_m}\frac{1}{n}\sum_{i=1}^n q(t_i)(\frac{w_i}{p(t_i)}-1)l(x(t_i)). $$
 Again if $l(x)$ is bounded, combining the bounded differences inequality and bounds for Rademacher sums lead to
 $$P(\Delta'(m,p_k)-[t_{m,k}+2\esp{}{\Omega_n'(m,k)}]>0)\le \frac{\delta}{k(k+1) m(m+1)},$$ with
 $$t_{m,k}=\sqrt{\frac{4 B^4\log(2 m(m+1)k(k+1)/\delta)}{n p_{\min}^2}}$$ and
 $$\Omega_n'(m,k)=\esp{\xi}{\sup_{x\in S_m}\frac{1}{n}\sum_{i=1}^n q(t_i)\frac{w_i}{p(t_i)}\xi_i l(x(t_i))}$$ which yields the uniform bound,
 \begin{equation}\label{cota52} P(\sup_{m,k}[\Delta'(m,P_k)-(t_{m,k}+2\esp{}{\Omega_n'(m,k)})]>0)<\delta
 \end{equation}
 Actually, the bounds in Section \ref{subsection2} follow from bounding the Rademacher sums   by $\Omega_n(m,k)\le C Tr(\Omega_{n,P_k})$,
 for a certain constant $C$ and using Lemma \ref{lema1} in the Appendix. In turn, this lemma  follows from a functional exponential inequality proved in \cite{Bousquet}. Hence, it would seem that equation (\ref{cota52}) does not add any interesting information to what has already been discussed. However, the more general setting is important
 because (in the bounded case) allows  passing from Rademacher sums to V--C dimensions  \cite[see again][for a general discussion]{essaimsurvey}  which then makes it possible to  consider more general solution spaces than a numerable union of target model spaces $S_m$. Bounds in this case are given by
 \begin{equation}\label{cota61} P(\sup_{x\in S_0,p}[L_n(x,y,p)-L(x)] >2[\sqrt{\frac{4 B^4\log(2  /\delta)}{n p_{\min}^2}}+2 \sqrt{\frac{2\log(n)V}{np_{\min}}}])<\delta
 \end{equation}
  where $V$ is the V--C dimension of the class of functions $S_0$.

 \section{Iterative procedure: updating the sampling probabilities}\label{subsection4}
 A major drawback of the batch procedure   is the appearance of  $p_{\min}$   in the denominator of error bounds, since typically $p_{\min}$ must be small in order for the estimation procedure to be effective. Indeed, since the expected number of effective samples is given by $\esp{}{N}:=\esp{}{\sum_i p(t_i)}$, small values of $p(t_i)$ are   required in order to gain in sample efficiency.

Proofs in Sections \ref{subsection2} and  \ref{subsection3} depend heavily on bounding expressions such as

  $$\frac{1}{n}\sum_{i=1}^n q(t_i) \frac{w_i}{p(t_i)}\varepsilon_i (x-x')(t_i)$$ or
  $$\frac{1}{n}\sum_{i=1}^n q(t_i) (\frac{w_i}{p(t_i)}-1) (x-x')^2(t_i).$$
  Thus, it seems like a reasonable alternative to consider
    iterative procedures for which at time $j$,
  $p_j(t_i)\sim \max_{x,x'\in S_j}|x(t_i)-x'(t_i)|$ with $S_j$ the current hypothesis space. In what follows we develop this strategy, adapting the results of \cite{Beygelzimer} from the classification  to the regression problem. Although we continue to work in the setting of model selection over bounded subsets of linearly spanned spaces, results can be readily extended to other frameworks such as additive models or kernel models. Once again, we will require certain additional restrictions associated to the  uniform approximation of $x_0$ over the target model space.

  More precisely. We start with an initial model set $S(=S_{m_0})$ and set $x^*$ to be the overall   minimizer of the loss function $L(x)$ over $S$. Assume additionally

   \begin{description}

       \item[AU]  $\sup_{x\in S}\max_{t \in \{t_1,\ldots,t_n\}}|x_0(t )-x(t )|\le B$
   \end{description}

   Let $L_n(x)=L_n(x,y,p)$ and $L(x)$ be as in Section \ref{subsection2}. For the iterative procedure introduce the notation
   $$L_{j}(x):=\frac{1}{n_j}\sum_{i=1}^{nj}q(t_{j_i})\frac{w_{i}}{p(t_{j_i})}(x(t_{j_i})-y_{j_i})^2, \quad j=0,\ldots,n$$ with  $n_j=n_0+j$ for $j=0,\ldots,n-n_0$. 
   
   In the setting of Section \ref{prelim}  for each $0\le j\le n$, $S_j$ will be the linear space spanned  by the collection $\{\phi_\ell\}_{\ell\in \mathcal{I}_j}$ with $|\mathcal{I}_j|=d_j$, $d_j=o(n)$.

   In order to bound the fluctuations of the initial step in the iterative procedure we consider the quantities defined in equations (\ref{pen1-1}) and (\ref{pen2})
   for $r=\gamma=2$. That is,  \begin{eqnarray*}\Delta_0&=&2\sigma^2 Q\left\{\frac{2(d_0+1)}{n_0}+\frac{\log^2(2/\delta)}{n_0}\right\}\\
   &&+2(\widetilde{\beta}_{m_0}(1+\widetilde{\beta}_{m_0}))^2B^2 .
   \end{eqnarray*}
   with $$\widetilde{\beta}_{m_0}=\frac{c_{m_0}(\sqrt{17}+1)}{2}\sqrt{\frac{d_{0} Q}{n_0 p_{\min}}}\sqrt{2\log(2^{7/4}m_0/\delta)}.$$

   As discussed in Section \ref{subsection1.3}, $\Delta_0$ requires some initial guess of $\|x_0-x_{m_0}\|^2_{n,q}$.
   Since this is not available, we consider the upper bound $B^2$. Of course this will possibly slow down the initial convergence as $\Delta_0$ might be too big,
   but will not affect the overall algorithm. Also remark we do not consider the weighting sequence $\theta_k$ of equation (\ref{pen2}) because the sampling probability is assumed fixed.

   Next set $B_j=\sup_{x, x'\in S_{j-1}}\max_{t \in \{t_1,\ldots,t_n\}}|x(t )-x'(t )|$ and define

  \begin{eqnarray*}&&\Delta_j=\sqrt{\sigma^2 Q[(\frac{ 2 (d_j+
 1)}{n_j})+\frac{\log^2(4n_j(n_j +1)/\delta)}{n_j}]}\\
 &&+\sqrt{\log(4n_j(n_j +1)/\delta)\frac{16B_j^2(2B_j\wedge 1)^2Q^2}{n_j}}+4\sqrt{4\frac{(d_j+1)\log  n}{n_j}}.
 \end{eqnarray*}

    The iterative procedure is stated as follows:
  \begin{enumerate}
  \item For $j=0$:
   \begin{itemize}
   \item Choose (randomly) an initial sample of size $n_0$, $M_0=\{t_{k_{1}},\ldots, t_{k_{n_0}}\}$.
   \item Let $\hat{x}_0$ be the chosen solution by minimization of $L_{0}(x)$ (or possibly a weighted version of this loss function). \item  Set $S_0\subset\{x\in S: L_0(x)<L_0(\hat{x}_0)+\Delta_0\}$
       \end{itemize}
      \item \label{pasos}At step $j$:
      \begin{itemize}
      \item Select (randomly) a sample candidate point $t_{j}$, $t_{j}\not\in M_{j-1}$. 
      
      Set $M_j=M_{j-1}\cup\{t_{j}\}$
      \item Set $p(t_j)=(\max_{x,x'\in S_{j-1}}|x(t_j)-x'(t_j)|\wedge 1)$ and generate $w_j\sim Ber(p(t_j))$. 
      
      If $w_j=0$, set $j=j+1$ and go to (\ref{pasos}) to choose a new sample candidate. 
      
       If $w_j=1$ sample $y_{j}$    and continue.
      \item  Let $\hat{x}_j=\mbox{arg}\min_{x\in S_{j-1}}L_j(x)+ \Delta_{j-1}(x)$
     \item  Set $S_j\subset\{x\in S_{j-1}:L_j(x)< L_j(\hat{x}_j)+\Delta_j\}$
     \item Set $j=j+1$ and go to (\ref{pasos}) to choose a new sample candidate.
       \end{itemize}
  \end{enumerate}

  Remark, that such as it is stated, the procedure can continue only up until time $n$ (when there are no more points to sample).
  If the process is stopped at time $T<n$ the term $\log(n(n+1))$ can be replaced by $\log(T(T+1))$.
We have the following result, which generalizes Theorem 2 in \cite{Beygelzimer} to the regression case. The proof is given in the Appendix.

  \begin{thm}\label{teoiterativo}Let $x^*=\mbox{arg}\min_{x\in S} L(x)$. Set $\delta>0$. Then, with probability at least $1-\delta$ for any $j\le n$
  \begin{itemize}
  \item $|L(x)-L(x')|\le 2\Delta_{j-1}$, for all $x,x'\in S_{j}$
  \item $L(\hat{x}_j)\le   [L(x^*)+2\Delta_{j-1}]$
  \end{itemize}
  \end{thm}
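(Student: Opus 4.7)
My plan is to argue by induction on $j$, coupling two invariants at every step: (i) the true minimizer $x^*$ stays in the current hypothesis set $S_j$, and (ii) every element of $S_j$ is within $2\Delta_{j-1}$ of $x^*$ in population loss. Once these invariants are maintained, the bound $L(\hat x_j)\le L(x^*)+2\Delta_{j-1}$ follows immediately from (i)-(ii) applied to $\hat x_j\in S_j$, since then both $\hat x_j$ and $x^*$ are in $S_j$. The base case $j=0$ is handled essentially by the batch analysis of Section~\ref{subsection1.3}: with the uniform bound $\|x_0-x\|_\infty\le B$ from [AU] playing the role of the unknown bias, an application of Lemma~\ref{lema03} plus Lemma~\ref{lema04} to the initial sample of size $n_0$ shows that on an event of probability at least $1-\delta/(n(n+1))$, the fluctuation $|L_0(x)-L(x)|$ is at most $\Delta_0/2$ uniformly on $S$, which is enough to place $x^*$ inside $S_0$ and to bound the spread of $L$ on $S_0$ by $2\Delta_0$.

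For the inductive step assume the invariants hold up to step $j-1$. The whole design of the iterative scheme now pays off through the adaptive choice $p_j=(\sup_{x,x'\in S_{j-1}}|x(t_j)-x'(t_j)|)\wedge 1$. For any two elements $x,x'\in S_{j-1}$, the pointwise gap $|l(x(t_j),y_j)-l(x'(t_j),y_j)|\le |x(t_j)-x'(t_j)|\cdot(|x(t_j)+x'(t_j)-2y_j|)$ is at most $p_j\cdot(2B_j\wedge 1)\cdot B$ (using [AU] and the induction hypothesis); hence the importance-weighted increment $q_i(w_j/p_j)[l(x)-l(x')]$ has bounded range and variance of the right order to allow a Bernstein inequality to give $|L_j(x)-L(x)-L_j(x^*)+L(x^*)|\le \Delta_j/2$ uniformly over $x\in S_{j-1}$, with failure probability at most $\delta/(2(n_0+j)(n_0+j+1))$. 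The three summands of $\Delta_j$ are then recognized as the three terms one must pay: the noise-projection term comes from Lemma~\ref{lema1} applied as in Lemma~\ref{lema04}, the $B_j^2(2B_j\wedge 1)^2/(n_0+j)$ term is the Bernstein contribution from the adaptive weights, and the Rademacher/VC term $4\sqrt{4(d_j+1)\log n/(n_0+j)}$ controls the supremum over $S_{j-1}$ via symmetrization (see the discussion in Section~\ref{subsection3}).

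Granted this uniform concentration, the induction closes as follows. Since $x^*\in S_{j-1}$ by hypothesis and $L_j(\hat x_j)\le L_j(x^*)$ by definition of $\hat x_j$, adding the concentration gives $L_j(x^*)\le L_j(\hat x_j)+\Delta_j$, so $x^*\in S_j$. Next, for any $x\in S_j$, $L_j(x)<L_j(\hat x_j)+\Delta_j\le L_j(x^*)+\Delta_j$, and subtracting the concentration inequality twice converts this to $L(x)\le L(x^*)+2\Delta_j$; symmetrizing in $x,x'\in S_j$ gives $|L(x)-L(x')|\le 2\Delta_{j-1}$ (the slightly weaker constant allows us to use the previous step's deviation for one of the two comparisons, matching the claimed indexing). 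A union bound over $j=0,\ldots,n$ costs $\sum_{j} \delta/((n_0+j)(n_0+j+1))<\delta$, yielding the theorem.

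The main obstacle I foresee is the second bullet of the induction: carefully showing that the Bernstein constant produced by the choice of $p_j$ really is $B_j^2(2B_j\wedge 1)^2$ and not something worse involving $1/p_{\min}$. This is the crux of why the iterative procedure improves on the batch one, and it requires tracking separately the variance and the range of $q_i(w_j/p_j)[l(x)-l(x')]$ on the adaptively defined class $S_{j-1}$, rather than naively bounding $1/p_j\le 1/p_{\min}$. A secondary technical point is the symmetrization step feeding into the $\sqrt{(d_j+1)\log n/(n_0+j)}$ summand, which must be done with respect to the weighted sampling measure and uses the VC-dimension bound $d_j+1$ for the class $(x-x_0)^2$, $x\in S_j$, as indicated in the remarks following the definition of $\Delta_j$.
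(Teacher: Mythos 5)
Your proposal follows essentially the same route as the paper: a per-step uniform deviation bound $|L_j(x)-L_j(x')-[L(x)-L(x')]|\le\Delta_j$ over $S_{j-1}$ (the paper's Lemma~\ref{lemaitera}, with the same three-way attribution of the terms of $\Delta_j$ to Lemma~\ref{lema1} for the noise, a bounded-differences/Bernstein bound exploiting the adaptive choice of $p_j$ to avoid $1/p_{\min}$, and a Rademacher/VC term), followed by an induction showing $x^*\in S_j$ for all $j$ and a diameter bound read off from the definition of the sets $S_j$. One small slip to fix: to conclude $L_j(x^*)\le L_j(\hat x_j)+\Delta_j$ you must combine the concentration with the population-level optimality $L(x^*)\le L(\hat x_j)$, not with the empirical inequality $L_j(\hat x_j)\le L_j(x^*)$, which points the wrong way and gives nothing; this is exactly how the paper closes the induction.
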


\begin{rem}
An important issue is related to the initial choice of $m_0$ and $n_0$. As the overall precision of the algorithm is determined by $L(x^*)$, it is important to select a sufficiently complex initial model collection.
However, if $d_{m_0}>>n_0$ then $\Delta_0$ can be big and $p_j\sim 1$ for the first samples, which leads to a more inefficient sampling scheme.
\end{rem}

  \subsection{Effective sample size}\label{subsection5}

  For any sampling scheme the expected number of effective samples is, as already mentioned,  $\esp{}{\sum_i p(t_i)} $.
  Whenever the sampling policy is fixed, this sum is not random and effective reduction of the sample size will depend on how small sampling probabilities are.
   However, this will increase the error bounds as a consequence of the factor $1/p_{\min}$. The iterative
   procedure allows a closer control of both aspects and under suitable conditions will be of order $\sum_j\sqrt{ L(x^*)+ \Delta_j}$, as we will prove below establishing appropriate bounds over the random sequence $p(t_j)$. Recall from the definition of the iterative procedure we have $p_j(t_i)\sim \max_{x,x'\in S_j}|x(t_i)-x'(t_i)|$, whence the expected number of effective samples is of the order of $\sum_j \max_{x,x'\in S_j}|x(t_i)-x'(t_i)|$.
 It is then necessary to control $\sup_{x,x'\in S_{j-1}}|x(t_i)-x'(t_i)|$ in terms of the (quadratic) empirical loss function  $L_j$. For this we must introduce some notation and results relating the supremum and $L_2$ norms \cite{birgemassart}.

   Let $S\subset L_2\cap L_\infty$ be a linear subspace of dimension $d$,  with basis $\Phi:=\{\phi_j, j\in m_S\}$, $|m_S|=d$. Set $\eta(S):=\frac{1}{\sqrt{d}}\sup_{x\in S,x\ne 0}\frac{\|x\|_\infty}{\|x\|_2}$, ${r}_\Phi:=\frac{1}{\sqrt{d}} \sup_{\beta,  \beta\ne 0}\frac{\sum_{\phi_j\in \Phi}\beta_j\phi_j}{|\beta|_\infty}$ and $\overline{r}:=\inf_\Lambda r_\Lambda$, where $\Lambda$ stands for any orthonormal basis of $S$. We require  the following result in  \cite{birgemassart}:
   \begin{lema}\label{lemamassartbirge} \cite[Lemma 1][]{birgemassart} Let $S$ be a $d$ dimensional linear subspace of $ L_2\cap L_\infty$, with basis $\Phi$,
   and set $\eta(S):=\frac{1}{\sqrt{d}}\sup_{t\in S }\|t\|_\infty/\|t\|_2$. Then
   \begin{enumerate}
   \item  $\eta(S)=\|\sum_{\phi_j \in \Phi}\phi_j^2\|_\infty^1/2$
   \item $\eta(S)\le \overline{r}\le \eta(S) \sqrt{d}$
   \end{enumerate}
   \end{lema}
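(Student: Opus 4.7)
The plan is to treat the two items separately, both resting on standard Hilbert-space / coefficient-norm identities, so the proof is short but relies on picking the extremal function carefully. Throughout, I would fix $\Phi=\{\phi_j\}_{j\in m_S}$ an orthonormal basis of $S$, so that for $t=\sum_j \beta_j\phi_j$ one has $\|t\|_2^2=|\beta|_2^2=\sum_j\beta_j^2$. The key elementary tool is Cauchy--Schwarz applied pointwise, and for the second item the standard chain $|\beta|_\infty\le|\beta|_2\le\sqrt{d}\,|\beta|_\infty$ on $\mathbb{R}^d$.

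For Part~1, the upper bound is immediate: for any $x$ and any $t=\sum_j\beta_j\phi_j\in S$ with $\|t\|_2=1$,
\[
|t(x)|^2=\Bigl|\sum_j\beta_j\phi_j(x)\Bigr|^2\le \Bigl(\sum_j\beta_j^2\Bigr)\Bigl(\sum_j\phi_j(x)^2\Bigr)\le \Bigl\|\sum_j\phi_j^2\Bigr\|_\infty,
\]
so $\sup_{\|t\|_2=1}\|t\|_\infty\le\|\sum_j\phi_j^2\|_\infty^{1/2}$. For the reverse inequality I would pick $x^\star$ attaining $\sup_x\sum_j\phi_j(x)^2$ and set $\beta_j^\star=\phi_j(x^\star)/(\sum_k\phi_k(x^\star)^2)^{1/2}$; the resulting function $t^\star\in S$ has $\|t^\star\|_2=1$ and $t^\star(x^\star)=(\sum_j\phi_j(x^\star)^2)^{1/2}$, producing the matching lower bound. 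Since the ratio $\|t\|_\infty/\|t\|_2$ depends only on $S$, the identity of Part~1 also shows en passant that $\|\sum_j\phi_j^2\|_\infty$ is basis-independent as long as the basis is orthonormal.

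For Part~2, let $\Lambda=\{\lambda_j\}$ be any orthonormal basis of $S$ and write $t=\sum_j\beta_j\lambda_j$. The comparison $|\beta|_\infty\le|\beta|_2=\|t\|_2$ gives
\[
\frac{\|t\|_\infty}{|\beta|_\infty}\ge \frac{\|t\|_\infty}{\|t\|_2},
\]
so taking sup over $\beta\ne0$ and invoking Part~1 yields $r_\Lambda\ge \frac{1}{\sqrt{d}}\sup_{t\in S}\|t\|_\infty/\|t\|_2$; after taking infimum over $\Lambda$ this gives $\overline{r}\ge \eta(S)$ with the definition in the lemma. Conversely, $|\beta|_2\le\sqrt{d}\,|\beta|_\infty$ gives
\[
\frac{\|t\|_\infty}{|\beta|_\infty}\le \sqrt{d}\,\frac{\|t\|_\infty}{\|t\|_2},
\]
hence $r_\Lambda\le \eta(S)\sqrt{d}$ for \emph{every} orthonormal basis $\Lambda$, which a fortiori bounds $\overline{r}$.

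The only real subtlety is the lower bound $\overline{r}\ge\eta(S)$: one must verify that the infimum over orthonormal bases does not cross below $\eta(S)$, but the inequality $|\beta|_\infty\le|\beta|_2$ used above is valid for \emph{every} orthonormal basis simultaneously, so passing to the infimum is immediate. Hence the obstacle is essentially notational rather than substantive; once the extremal $t^\star$ for Part~1 is constructed the rest is a routine application of the $\ell^\infty$--$\ell^2$ norm comparison, and one recovers exactly the statement cited from \cite{birgemassart}.
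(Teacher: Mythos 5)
Your proof is correct, but there is nothing in the paper to compare it against: Lemma \ref{lemamassartbirge} is quoted verbatim from Birg\'e and Massart and the paper supplies no proof, so your argument is in effect reconstructing the original one (which is indeed the standard Cauchy--Schwarz/extremal-function argument). Three small remarks. First, the identity in Part~1 requires $\Phi$ to be \emph{orthonormal} (you assume this, rightly; the paper's statement says only ``with basis $\Phi$''), and your chain $\sup_{\|t\|_2=1}\|t\|_\infty=\|\sum_j\phi_j^2\|_\infty^{1/2}$ actually yields $\eta(S)=\frac{1}{\sqrt d}\|\sum_j\phi_j^2\|_\infty^{1/2}$ once the $1/\sqrt d$ in the definition of $\eta(S)$ is carried through --- the missing normalization in the paper's displayed identity is a transcription slip, and your version is the one consistent with Part~2. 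Second, ``pick $x^\star$ attaining $\sup_x\sum_j\phi_j(x)^2$'' needs an $\varepsilon$-approximation in general, since in $L_\infty$ the essential supremum need not be attained; taking $x$ with $\sum_j\phi_j(x)^2\ge\|\sum_j\phi_j^2\|_\infty-\varepsilon$ and letting $\varepsilon\to0$ closes this, and in the paper's fixed finite-design setting the supremum is over finitely many points so the issue disappears. Third, your observation that the upper bound $r_\Lambda\le\eta(S)\sqrt d$ holds for \emph{every} orthonormal $\Lambda$ (not just the infimizing one) is exactly what makes the bound on $\overline r$ immediate, and is worth keeping explicit.
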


    \begin{ex}
    Some examples of $\overline{r}$ for typical linear settings include \cite[ pp 337--338]{birgemassart}:
    \begin{enumerate}
   \item Trigonometric expansions: $\overline{r}\le \sqrt{2d}$.
   \item Polynomials: $\overline{r}\le d$.
   \item Localized basis:
   \begin{itemize}
   \item $\{\phi_j=\sqrt{d} 1_{[(j-1)/d,j/d]}\}_{1\le j\le d}$: $\overline{r}\le 1$
   \item Piecewise polynomials on $[0,1]$ of degree m: $\overline{r}\le 2m+1$
   \item Orthonormal wavelet systems: $\overline{r}\le C$, for a certain constant $C$ depending on the form of the basis.
   \end{itemize}

    \end{enumerate}
    \end{ex}

    We have the following result
    \begin{lema}\label{simplesize}
    Let $\hat{x}_j$ be the sequence of iterative approximations to $x^*$ and $p_j(t)$ be the sampling probabilities in each step of the iteration, $j=1,\ldots,T$. Then, the effective number of samples, that is, the expectation of the required samples  $N_e=\esp{}{\sum_{j=1}^T p_j(t_j)}$ is bounded by
    $$N_e\le 2\sqrt{2}\overline{r}( \sqrt{L(x*)}\sum_{j=1}^T \sqrt{d_j}+\sum_{j=1}^T \sqrt{d_j\Delta_j}).$$
    \end{lema}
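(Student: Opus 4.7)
The plan is to bound $p_j$ pointwise on a high-probability event, take expectations, and then sum over $j$. The key tool is Lemma~\ref{lemamassartbirge}, which converts a supremum-norm estimate on a finite-dimensional linear space into an $L_2$-norm estimate, after which the diameter of $S_{j-1}$ is controlled by the excess-loss bound coming from Theorem~\ref{teoiterativo}.

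First, I would condition on the event $\Omega_\delta$ of probability at least $1-\delta$ on which the conclusions of Theorem~\ref{teoiterativo} hold. On this event, $x^*\in S_{j-1}$ for every $j$, and for every $x\in S_{j-1}$ the excess-loss bound $L(x)-L(x^*)\le 2\Delta_{j-2}$ holds (the initial step, which would instead use $\Delta_0$ and the defining property of $S_0$, is treated as a base case). Starting from $p_j(t_j)\le \max_{x,x'\in S_{j-1}}\|x-x'\|_\infty$ and noting that $S_{j-1}$ is a linear subspace of dimension $d_{j-1}$ spanned by a subset of the $L_2(q)$-orthonormal basis of [AQ], I would invoke Lemma~\ref{lemamassartbirge} to get
\[
\|x-x'\|_\infty\le \overline{r}\sqrt{d_{j-1}}\,\|x-x'\|_{L_2(q)},
\]
and then use [AS] to replace $\|\cdot\|_{L_2(q)}$ by the empirical norm $\|\cdot\|_{n,q}$ up to a factor $1+O(n^{-1-\alpha})$ that can be absorbed into the constants.

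For the $n,q$-diameter of $S_{j-1}$ I would route through $x_0$: on $\Omega_\delta$ one has $\|x-x_0\|^2_{n,q}=L(x)-\sigma^2\bar q\le L(x)\le L(x^*)+2\Delta_{j-2}$, so the triangle inequality yields
\[
\|x-x'\|_{n,q}\le 2\bigl(\sqrt{L(x^*)}+\sqrt{2\Delta_{j-2}}\bigr).
\]
Combining with the Birgé--Massart step produces a deterministic bound $p_j\le 2\sqrt{2}\,\overline{r}\bigl(\sqrt{d_{j-1}L(x^*)}+\sqrt{d_{j-1}\Delta_{j-2}}\bigr)$, up to the cosmetic reindexing $j-1\mapsto j$. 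Since this bound does not depend on the random sample point $t_j$, taking expectations and summing over $j=1,\ldots,T$ gives the claim; the residual contribution from $\Omega_\delta^c$ is controllable by $\delta T$ and absorbed in the small-$\delta$ regime.

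The hard part, I expect, will be the clean juggling of the three norms $\|\cdot\|_\infty$, $\|\cdot\|_{L_2(q)}$, and $\|\cdot\|_{n,q}$: Lemma~\ref{lemamassartbirge} is stated in terms of an intrinsic $L_2$ norm, whereas the loss-bound machinery of Theorem~\ref{teoiterativo} operates on the empirical side, so [AS] must be invoked quantitatively and the resulting error terms shown not to disturb the stated rate. A secondary difficulty is the off-by-one indexing between the iteration step $j$ and the space $S_{j-1}$ used at that step, which forces a small reindexing to reach the stated form $\sum_j\sqrt{d_j\Delta_j}$ and requires the initial step to be handled separately, since $\Delta_{-1}$ is not defined.
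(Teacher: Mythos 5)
Your proposal is correct and follows essentially the same route as the paper: bound $p_j$ by the sup-norm diameter of the current hypothesis space, convert to an $L_2$ bound via Lemma~\ref{lemamassartbirge}, control that diameter through the excess-loss bound of Theorem~\ref{teoiterativo}, and conclude with $\sqrt{a+b}\le\sqrt{a}+\sqrt{b}$ and a sum over $j$ (the paper routes the triangle inequality through $x^*$ rather than $x_0$, which is immaterial). If anything you are more careful than the paper, which silently identifies the $L_2(q)$ and empirical norms, ignores the contribution of the complement of the high-probability event to the expectation, and elides the $S_{j-1}$ versus $S_j$ indexing.
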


 The proof is given in the Appendix. 

\section{Appendix}\label{apen}

\begin{proof} Proof of Lemma \ref{lema03}

 We will achieve the proof by bounding
 $$\|(R_{m,p}-\esp{}{R_{m,p}})[x_0-x_m]\|^2_{n,q}\le \|R_{m,p}-\esp{}{R_{m,p}}\|^2 \|x_0-x_m\|^2_{n,q}.$$
 
 For this we shall consider a double application of  a straightforward generalization
 of Theorem 7.3 in \cite{rauhut}, whose proof is given below.
\begin{sloppypar}
\begin{lema}\label{lemarauhut} Let $A\in \mathbb{R}^{n\times m}$ be some matrix whose  rows, $a(l)\in \mathbb{R}^{ m}$, l=1,\ldots n,  satisfy  $\|a(l)\|_2\le K\sqrt{m}$
 for some constant $K \geq 1$. Consider the matrix $A^tA=\sum_{l=1}^n a(l) a(l)^t$ and let $\Lambda_{A}=\frac{1}{n}\|\esp{}{A^tA}\|$. Set $\tau=(\sqrt{17}+1)/4$. We have the following bounds:
 \begin{itemize}
\item For any $r\ge 1$, define $E_r:=\esp{}{\|\frac{1}{n
\Lambda_{A}}(A^tA-\esp{}{A^tA})\|^r}$  and let
$$\Sigma_{r,m,n}=\left(\frac{2K}{\sqrt{n\Lambda_{A}}}\sqrt{\frac{m}{n}}\right)^r
2^{3/4} m r^{r/2} e^{-r/2}.$$ Then for any $r \geq 2$,
$$E_r^{1/r} \leq   \tau \Sigma_{r,m,n}.$$
\item Let $\delta<1/2$, then the following bound in probability holds true for $u\ge \sqrt{2}$
$$P(\|\frac{A^tA-\esp{}{A^tA}}{n \Lambda_{A}}\| > \frac{2\tau K}{\sqrt{\Lambda_{A}}}\sqrt{\frac{m}{n}}u)\le m 2^{3/4}e^{-u^2/2},$$
or equivalently with probability at least $1-\delta$
$$\|\frac{A^tA-\esp{}{A^tA}}{n \Lambda_{A}}\| \leq \frac{2\tau K}{\sqrt{\Lambda_{A}}}\sqrt{\frac{m}{n}}\sqrt{2log(2^{3/4}m/\delta)}.$$
\end{itemize}
\end{lema}
\end{sloppypar}
\begin{sloppypar}
With this lemma we continue the proof of Lemma \ref{lema03}.

 Recall that $R_{m,p}=\frac{1}{n}G_{m}(\frac{1}{n}G_{m}^{t}D_{w,q,p}G_{m})^{-1}G_{m}^{t}D_{w,q,p}$.
On the other hand, observe that since $A_{m,p}:=1/nG_m^t D_{pqw}G_m$ is a positive definite matrix its inverse exists
and moreover we may write $A_{m,p}^{-1/2}$ using the standard spectral notation. Also since $A_{m,p}$ is symmetric we have $A_{m,p}^{-1/2}=(A_{m,p}^{-1/2})^t$.
\end{sloppypar}
Consider the matrix $\tilde{A}_{m,p}= D_{pqw}^{1/2}G_m$. Then we have its rows $\tilde{a}_{m,p}(l)$ satisfy
$$\|\tilde{a}_{m,p}(l)\|=\sqrt{\frac{w_lq(t_l)}{p(t_l)}\sum_{j=1}^m (\phi_j(t_l))^2}\le c_m\sqrt{m\frac{Q}{p_{min}}}$$ and
$\Lambda_{\tilde{A}_{m,p}}=\|\esp{}{1/n
\tilde{A}^t_{m,p}\tilde{A}_{m,p}}\|\le 1+c(n)^{-\alpha-1}$
under assumption [AS].

In what follows set $k=k(p)$ and let $\delta'_k=\delta/(2k(k+1))$.
 A first application of Lemma
\ref{lemarauhut} then yields
$$\|A_{m,p}-\esp{}{A_{m,p}}\| \leq 2\tau c_m \sqrt{1+c(n)^{-\alpha-1}} \sqrt{\frac{m Q}{n p_{min}}}\sqrt{2\log(2^{3/4}m/\delta'_k)}$$ with probability greater  $1-\delta'_k$.
Here, the choice of $\delta'_k$ is required in order to account for
the supremum over the collection of possible sampling schemes.

It then follows using a classical Neumann series expansion that with
probability greater than $1-\delta'_k$,
\begin{equation}\label{cotainversa}
\|A_{m,p}^{-1}\|\le
\frac{1}{1-(\sqrt{1+cn^{-\alpha-1}}\widetilde{\beta}_{m,k} +cn^{-\alpha-1})},
\end{equation}
where $$\widetilde{\beta}_{m,k}=2\tau c_m\sqrt{\frac{m Q}{n
p_{\min}}}\sqrt{2\log(2^{3/4}m/\delta'_k)}.$$

Now, consider the matrix  $\tilde{E}_{m,p}= A_{m,p}^{-1/2}G_m^t
D_{pqw}^{1/2}$ and note that the projection matrix $R_{m,p}=
\frac{1}{n} G_m
A_{m,p}^{-1}G_m^tD_{pqw}=\frac{1}{n}\tilde{E}^t_{m,p}\tilde{E}_{m,p}$.
Using the singular value decomposition and the definition of
$\tilde{E}_{m,p}$, we have
$$\|\tilde{E}^t_{m,p}\tilde{E}_{m,p}-\esp{}{\tilde{E}^t_{m,p}\tilde{E}_{m,p}}\|=\|\tilde{E}_{m,p}\tilde{E}^t_{m,p}-\esp{}{\tilde{E}_{m,p}\tilde{E}^t_{m,p}}\|,$$
since the singular values are the same. Thus, it is enough for our
purposes to bound
$\|\tilde{E}_{m,p}\tilde{E}^t_{m,p}-\esp{}{\tilde{E}_{m,p}\tilde{E}^t_{m,p}}\|$
in probability.

Next, we bound the rows of matrix $\tilde{E}^T_{m,p}$, $\tilde{e}^t_{m,p}(l)$. As before and using the bound in (\ref{cotainversa}) we have
$$\|\tilde{e}^t_{m,p}(l)\|\le c_m \sqrt{\frac{mQ}{p_{min}}} [1+(\sqrt{1+cn^{-\alpha-1}}\widetilde{\beta}_{m,k} +cn^{-\alpha-1})]^{1/2}.$$

On the other hand, because $R_{m,p}$ is a projection matrix,
$\|R_{m,p}\|=1$ and we have
\begin{eqnarray*}&&
1=\esp{}{\|R_{m,p}\|}\ge \sup_{\|u\|=1} \esp{}{\|R_{m,p}u\|}\\
&&\ge \sup_{\|u\|=1} \|\esp{}{R_{m,p}}u\|=\|\esp{}{R_{m,p}}\|
\end{eqnarray*}
so that $\Lambda_{\tilde{E}^t_{m,p}}\le 1$.
\begin{sloppypar}
 Then Lemma \ref{lemarauhut}
yields the stated result by the choice of the penalization
$\widetilde{B}_1(m,p,\delta)$ and  taking a union bound over $p \in
\mathcal{P}$.
\end{sloppypar}
\end{proof}

\begin{proof} of Lemma \ref{lemarauhut}

The proof  follows closely the ideas of the proof of Theorem 7.3 p. 62 in \cite{rauhut}.
Let $A\in \mathbb{R}^{n\times m}$ be a random matrix, with rows $a(l)\in
\mathbb{R}^{ m}$, l=1,\ldots n, satisfying
\begin{equation}\label{cotarauhut1}\|a(l)\|\le
K\sqrt{m}\end{equation}
 for some constant $K \geq 1$. Recall $A^tA=\sum_{l=1}^n a(l) a(l)^t$.

For the first part of the lemma we must bound
$E_r=\esp{}{\|\frac{1}{n
\Lambda_{A}}(A^tA-\esp{}{A^tA})\|^r}$, where  $\Lambda_{A}:=\frac{1}{n}\|\esp{}{A^tA}\|$.
Using the symmetrization Lemma  \citep[see][]{rauhut},  we have for all $2\leq r <
\infty$,
$$E_r \leq (\frac{2}{n\Lambda_{A}})^r \esp{}{\|\epsilon_l a(l) a(l)^t\|^r}.$$
where $\epsilon=(\epsilon_1,\ldots,\epsilon_n)$ is a Rademacher sequence
independent of $a(1), \ldots, a(n)$.

Thus, the following inequality holds
$$E_r\leq \left(\frac{2}{n\Lambda_{A}}\right)^r 2^{3/4} m r^{r/2} e^{-r/2} \esp{}{\|A\|^r\underset{l=1,\ldots,n}{\rm{max}} \|a(l)\|^r}, $$
where we have used    Rauhut's Lemma 6.18, p. 46, \cite{rauhut}  \cite[which is a version of Rudelson's  Lemma in][]{rudelson} and the Cauchy-Schwarz inequality  to obtain the stated result.

Furthermore, using the bound (\ref{cotarauhut1}) and applying the triangle inequality  yields
 \begin{eqnarray}
&&\nonumber \esp{}{\|A\|^r\underset{l=1,\ldots,n}{\rm{max}} \|a(l)\|^r}\\
&\nonumber \leq&\sqrt{\esp{}{\|A^tA\|^r}\esp{}{\underset{l=1,\ldots,n}{\rm{max}} \|a(l)\|^{2r}}}\\
&\label{cotarauhut2} \leq&  K^{r}m^{r/2} (n\Lambda_{A})^{r/2}((\esp{}{\|\frac{1}{n \Lambda_{A}}(A^tA-\esp{}{A^tA})\|^r})^{1/r} + 1)^{r/2},
\end{eqnarray}
where we have used  $\|\esp{}{1/nA^tA}/\Lambda_A\| = 1$.
Now, recall $$\Sigma_{r,m,n}=\left(\frac{2K}{\sqrt{n\Lambda_{A}}}\sqrt{\frac{m}{n}}\right)^r 2^{3/4} m r^{r/2} e^{-r/2}.$$
Then, using  the inequality (\ref{cotarauhut2})
$$E_r\leq \Sigma_{r,m,n} (E_r^{1/r} + 1)^{r/2}.$$
Whence, squaring the last inequality and completing  squares yields
$$\left(E_r^{1/r}-\Sigma_{r,m,n}/2\right)^2 \leq \Sigma_{r,m,n}^2 + \Sigma_{r,m,n}^4/4 .$$
 In the following we assume that $\Sigma_{r,m,n} \leq 1/2$. Thus,
 $$E_r^{1/r} \leq \sqrt{17}/4\Sigma_{r,m,n} + 1/4 \Sigma_{r,m,n} = \tau \Sigma_{r,m,n}.$$
where $\tau=(\sqrt{17}+1)/4$.

For the second part of the lemma we want to bound in probability
$\|\frac{1}{n \Lambda_{A}}(A^tA-\esp{}{A^tA})\|$.
The proof then follows directly from the first part of the lemma using the Markov inequality.

\end{proof}

\begin{proof} Proof of Lemma \ref{lema04}

For a given positive function $f$, recall
$\|u\|_{f,n}^2=1/n\sum_{i=1}^nf(t_i)u^{2}(t_i)$. Let $u\in S_m$ and
consider a linear application $A:R^n\to R^m$. Define
$\eta(A,z):=\sqrt{z^tA^tAz}$ for $z\in R^n $ and
$\eta_f(A,z):=\sup_{\|u\|_{f,n}=1}\sum_{i=1}^n f(t_i)z_i (A^tu)_i$.
Then, $\eta_f(A,z)=\eta(AD_f^{1/2},z)$, where $D_f$ is the diagonal
matrix with entries $f_i$.

On the order hand, note that
$\|R_{m,p}\varepsilon\|_{n,q}=\eta(R_{m,p}D_q^{1/2},\varepsilon)$.
The proof then follows directly from the next lemma, whose proof is
contained in \cite{Fermin--Ludena}.

\begin{lema} \label{lema1} Let
${\mathbf \varepsilon}=(\varepsilon_1,\ldots,\varepsilon_n)^t$ be a
vector of i.i.d. random variables  satisfying the moment condition
[MC].  Let $A$ be a given $m \times n$ matrix. Define
$\eta(A)=\eta(A,\varepsilon)=\sqrt{{\bf \varepsilon}^t A^t A {\bf \varepsilon}}$. Then,
for $r>1, u>0$ and $\theta>0$ there exists a positive constant $d$
that depends on $r$ such that the following inequality holds
\begin{eqnarray*}
&&P(\eta^2(A)\ge \sigma^2[Tr(A^tA)+ \rho(A^tA)]r(1+\theta)+\sigma^2 u)\\
\nonumber &&\le \exp\{-\sqrt{d (1/\rho(A^tA) u+ r \theta
[Tr(A^tA)/\rho(A^t A)+1] )}\}. \end{eqnarray*}
\end{lema}

To apply Lemma \ref{lema1}, we have to study the terms of the trace and the
spectral radius of the matrix
$\Gamma=(R_{m,p}D_q^{1/2})^tR_{m,p}D_q^{1/2}$. But, as $R_{m,p}$ is
a projection operator then $Tr(\Gamma)\le Q m$ and the spectral
radius $\rho(\Gamma)\le Q$.

Thus, we have, from the definition of ${\rm
\widetilde{B}}_2$ that
\begin{eqnarray*} && P\bigg(\sup_{\mathcal{P}}\left\{\|R_{m,p}{\bf
\varepsilon}\|_{n,q}^2-{\rm
\widetilde{B}}_2(m,p,\delta)\right\}
>0 \bigg)\\
 & \leq & \delta /2 \times \sum_k
\exp\{-\sqrt{d r \theta_k [m+1]}\}
\end{eqnarray*}
which yields the desired result.
\end{proof}

\begin{proof} of Theorem \ref{teoconsistencia1}

For any $p \in \mathcal{P}$,
\begin{eqnarray*}
&& \| x_m-\hat{x}_{m,\hat{p}}\|_{n,q}^2 \\
&&\leq \| x_m-\hat{x}_{m,p}\|_{n,q}^2  +  \left\lbrace \| x_m-\hat{x}_{m,\hat{p}}\|_{n,q}^2 -\widetilde{B}(m,\hat{p},\delta,\gamma,n)\right\rbrace \\
&& \quad + \left\lbrace \widetilde{B}(m,p,\delta,\gamma,n) - \|
x_m-\hat{x}_{m,p}\|_{n,q}^2\right\rbrace
\end{eqnarray*}
where $\widetilde{B}(m,p,\delta,\gamma,n)=(1+\gamma)
\widetilde{B}_1(m,p,\delta)+(1+1/\gamma)\widetilde{B}_2(m,p,\delta)$ with $\widetilde{B}_1$ and
$\widetilde{B}_2$ defined in (\ref{pen1-1}) and  (\ref{pen2}).

On the other hand,  recall that (see equation \eqref{eq:descerror}),
$$x_m-\hat{x}_{m,p} = \esp{}{R_{m,p}}[x_{0}-x_{m}]+(R_{m,p}-\esp{}{R_{m,p}})[x_{0}-x_{m}]+ R_{m,p}\varepsilon.$$ Since for any $0<\gamma<1$,  $2ab \leq \gamma a^2 + 1/\gamma b^2$ holds
for all $a,b \in {\rm I\!R}$, following standard arguments we have
\begin{eqnarray*}
&&\| x_m-\hat{x}_{m,p}\|_{n,q}^2 \\
&& \leq 2
\|\esp{}{R_{m,p}}[x_{0}-x_{m}\|^2 + 2(1+\gamma) \|(R_{m,p}-\esp{}{R_{m,p}})[x_{0}-x_{m}]\|^2\\
&& \quad + 2(1+1/\gamma) \|R_{m,p}\varepsilon\|_{n,q}^2.
\end{eqnarray*}

Thus,
\begin{eqnarray*}
&& \| x_m-\hat{x}_{m,\hat{p}}\|_{n,q}^2 \\
&& \leq  6 \|\esp{}{R_{m,p}}(x_m-x_0)\|^2_{n,q} \\
&& \quad + 6(1+\gamma)\widetilde{B}_1(m,\hat{p},\delta)+6(1+1/\gamma)\widetilde{B}_2(m,\hat{p},\delta) \\
&& \quad + 6 (1+\gamma) ( \sup_{\mathcal{P}}\left\lbrace \|R_{m,p}(x_m-x_0)-\esp{}{R_{m,p}}(x_m-x_0)\|^2_{n,q}\right.\\
&& \quad - \widetilde{B}_1(m,p,\delta)\left. \right\rbrace) \\
&& \quad + 6(1+\gamma^{-1}) ( \sup_{\mathcal{P}}\left\lbrace
\|R_{m,p}\varepsilon\|^2_{n,q}- \widetilde{B}_2(m,p,\delta)\right\rbrace)
\end{eqnarray*}
Finally, as follows from Lemma \ref{lema03} and \ref{lema04}, with
probability greater than $1-\delta$, we have the stated result.
\end{proof}

\begin{proof} of Lemma \ref{lema3--1}

Recall, from Lemma \ref{lema03},  $A_{m,p}=1/nG_m^tD_{w,q,p}G_m$ and set $A_{m}=\esp{}{A_{m,p}}=1/nG_m^tD_{q}G_m$. Then
$R_{m,p}=1/nG_m A_{m,p}^{-1}G_m^tD_{w,q,p}$ and
$$R_{m}=\esp{}{1/nG_m A_{m}^{-1}G_m^tD_{w,q,p}}=1/nG_m A_{m}^{-1}G_m^tD_{q}.$$
 Remark that under condition [AS], $\|A_{m}-I\|\le c n^{-1-\alpha}$.

Set $Q_{m,p}=A_{m,p}-I$, so that $\|\esp{}{Q_{m,p}}\| \le c n^{-1-\alpha}$.
Set $K_A:=c_m\sqrt{Q/p_{\min} }$.   By Lemma \ref{lemarauhut} we have, for any $r\ge 2$,
\begin{eqnarray*} && E_{m,p,r}^{1/r}:=[\esp{}{\|{A_{m,p}}-\esp{}{A_{m,p}}\|}^r]^{1/r}\\
&&\le \tau \left(\frac{2K_A}{\sqrt{n\varLambda}}\sqrt{\frac{m}{n}}\right)^r \varLambda 2^{3/4} m r^{r/2} e^{-r/2}.
\end{eqnarray*}
where $\varLambda=\|\esp{}{A_{m,p}}\|=O(1)$.
\begin{sloppypar}
Whence, for big enough $r$, since $m=o(n)$ and $p_{\min}^{-1}$ is fixed, we have   $E_{m,p,r}^{1/r} =O(n^{-1-\alpha})$ and thus  by H\"older's inequality that
$\esp{}{\|Q_{m,p}\|}\le O(  n^{-1-\alpha})$. The latter yields that, in particular, $\esp{}{\|Q_{m,p}\|} <1$.
Set $C_{m,p}:=A_{m,p}^{-1}-I$. Using the classical Neumann expansion, under condition [AS], by the Monotone Converge Theorem we may finally bound $\esp{}{\|C_{m,p} \|}\le c n^{-1-\alpha} $ for a certain positive constant $c$. We also have $A_{m}^{-1}=I+C_{m}$ with $\|C_{m} \|\le n^{-1-\alpha}$.
\end{sloppypar}
On the other hand remark, from the definition of the spectral norm, that for any matrix $B$,  $\|B\|=\|B^t\|=\sqrt{\|BB^t\|}$,
so that for any given matrix $M$,
\begin{eqnarray*}
&&\|1/nG_m MG_m^tD_{w,q,p}[x_0-x_m]\|_{n,q}\le \|M\| \|1/nG_m G_m^t\| \|D_{w,q,p}\| \|[x_0-x_m]\|_{n,q}\\
&&\le \|M\| \|[x_0-x_m]\|_{n,q}/p_{\min},
\end{eqnarray*}
where the last bound follows from the definition of the diagonal matrix $D_{w,q,p}$ and the bounds on $\|1/nG_m G_m^t\|$ under condition [AS].

Then, since by definition $R_m[x_0-x_m]=0$ we have

\begin{eqnarray*}
&& \|\esp{}{R_{m,p}}[x_0-x_m]\|_{n,q}\\
&&=\|\esp{}{1/nG_m[I+C_{m,p}+A_{m}^{-1} -A_{m}^{-1}]G_m^tD_{w,q,p}}[x_0-x_m]\|_{n,q}\\
&& \le\|\esp{}{1/nG_m C_{m,p}G_m^tD_{w,q,p}}[x_0-x_m]\|_{n,q}\\
&&+\|1/nG_m[I-A_{m}^{-1}]G_m^t\esp{}{D_{w,q,p}}[x_0-x_m]\|_{n,q}\\
&&\le \esp{}{ \|  C_{m,p}\| \|1/nG_mG_m^t\| \|D_{w,q,p}\|} \|x_0-x_m\|_{n,q}\\
&&+\|I-A_{m}^{-1}\| \|1/nG_mG_m^t\| \|\esp{}{D_{w,q,p}}\| \|x_0-x_m\|_{n,q}\\
&& \le c[\frac{n^{-1-\alpha}}{p_{min}}+n^{-1-\alpha}]\|x_0-x_m\|_{n,q},
\end{eqnarray*}
where, for the line before last we have used $\|\esp{}{B}\| \le \esp{}{\|B\|}$ for any given matrix $B$, and the last line follows from the above discussion.

\end{proof}

\begin{proof} of Theorem \ref{teoconsistencia2}

The proof follows from Lemma \ref{lema2} below
\end{proof}

In order to state Lemma \ref{lema2} we introduce
for any given $p$  and $x\in S_m,x'\in S_{m'}$, the quantities $$\Delta_1(x,x',p):=[L_{n}(x,y,p)-L_{n}(x,p)]-[L_{n}(x',y,p)-L_{n}(x',p)]$$
and $$\Delta_2(x,x',p):=[L_n(x,p)-L(x)]-[L_n(x',p)-L(x')].$$

We then have.

 \begin{lema}\label{lema2}
   Let
${\mathbf \varepsilon}$ be a
vector of i.i.d. random variables  satisfying the moment condition
[MC]. Assume that the conditions [AB], [AS] and [AQ] are satisfied. Let $\mathcal{P}:=\{p_{k}, k\ge 1 \}$
be a numerable collection of   $[0,1]$ valued functions and set $p_{k,min}=\min_i p_{k}(t_{i})$. Assume  that $\min_{k}p_{k,min}>p_{\min}$
and $\theta_{m,k} \geq 0$, such that  the following Kraft inequality $\sum_{m,k}e^{-\sqrt{d r \theta_{m,k}(d_m+1) }} < 1$ holds.
Assume $pen_0$, $pen_1$ and $pen_2$ to be selected according to  (\ref{pen0}), (\ref{pen1new}) and (\ref{pen2new}) respectively. Let $x\in S_m$ and $x'\in S_{m'}$. Then
 \begin{eqnarray*}
 &&P(\sup_{m,m',k}( \Delta_1(x,x',p_k)\\
 &&-[\frac{1}{\gamma p_{\min}^2}(pen^2_2(m,p_k,\delta)+pen^2_2(m',p_k,\delta))\\
 &&+
 \gamma(\|x_0-x\|_{n,q}^2+\|x_0-x'\|_{n,q}^2)])>0)\\
 &\le& \delta/3
 \end{eqnarray*}
 and
 \begin{eqnarray*}
 &&P(\sup_{m,m',k}\{\Delta_2(\hat{x}_{m,p},x',p_k)\\
 &&-
 [2p_0(m,p_k,\delta)+(\frac{ {1}}{p_{\min}}+\frac{ {1}}{\gamma})pen_{1}(m,p_k,\delta)\\
 &&+2p_0(m',p_k,\delta)+
 \frac{ {1}}{p_{\min}}pen_{1}(m',p_k,\delta)\\
 &&+
 3\gamma\|x_0-x_m\|_{n,q}^2+(\frac{1}{p_{\min}^2}(\frac{1}{\gamma}+1)+\frac{1}{\gamma})pen_2(m',p_k,\delta)\\
 &&+2((c+1)\frac{n^{-(1+\alpha)}QC}{p_{min}})^2]\}>0)\\
 &\le& 2\delta/3
 \end{eqnarray*}
 \end{lema}

\begin{proof}

In order to simplify notation throughout the proof of the lemma we use $p$ instead of $p_k$.
The first part of Lemma \ref{lema2} is rather standard, the only
care being taking into account the random norm.  For any $x\in S_m$ and $x'\in S_{m'}$ we have

 \begin{eqnarray*}
&&|\Delta_1(x,x',p)|\\
&=&|\frac{2}{n}\sum_{i=1}^n q(t_i) \frac{w_i}{p(t_i)}\varepsilon_i (x-x')(t_i)|\\
&\le&|\frac{2}{n}\sum_{i=1}^n q(t_i) \frac{w_i}{p(t_i)}\varepsilon_i (x_0-x)(t_i)|\\
&&+|\frac{2}{n}\sum_{i=1}^n q(t_i) \frac{w_i}{p(t_i)}\varepsilon_i (x_0-x')(t_i)|\\
&\le&2\|(x_{0}-x)\|_{n,qw/p} \sup_{\|v\|_{n,qw/p}=1, v\in S_m }\frac{1}{n}\sum_{i=1}^n q(t_i) \frac{w_i}{p(t_i)}\varepsilon_iv_i\\
&&+2\|(x_{0}-x')\|_{n,qw/p}\sup_{\|v\|_{n,qw/p}=1, v\in S_{m'} }\frac{1}{n}\sum_{i=1}^n q(t_i) \frac{w_i}{p(t_i)}\varepsilon_iv_i\\
&\le& \gamma(\|x_0-x\|_{n,q}^2+\|x_0-x'\|_{n,q}^2)\\
&&+\frac{1}{\gamma p_{\min}^2}(\|R_{m,p}\varepsilon\|_{n,qw/p}^2+
\|R_{m',p}\varepsilon\|_{n,qw/p}^2),
\end{eqnarray*}
where we have used $\|R_{m,p}\varepsilon\|_{n,qw/p}=\sup_{\|v\|_{n,qw/p}=1}\frac{1}{n}\sum_{i=1}^n q_i\frac{w_i}{p_i} \varepsilon_iv_i$ and
the inequality $2ab\le \gamma a^2+1/\gamma b^2$
to obtain the stated result.

Hence,
\begin{eqnarray}
&&\nonumber P_w(\sup_{m,m',p}[\Delta_1\\
&&\nonumber-\frac{1}{p_{\min}^2 \gamma} (pen_2(m,p,\delta)+
pen_2(m',p,\delta))\\
&&\nonumber-\gamma(\|x_0-x\|_{n,q}^2+\|x_0-x'\|_{n,q}^2)]>0)\\
&\le&\nonumber 2\sum_{m,k}P_{w}(\|R_{m,p}\varepsilon\|_{n,qw/p}^2>pen_2(m,p_k,\delta))\\
&\le& \label{cota30}\delta/3
\end{eqnarray}
by Lemma \ref{lema1} and the choice of the penalization $pen_2$ in (\ref{pen2new}), recalling $R_{m,p}$ is a projection matrix.

The term $\Delta_2$ requires a little more work. To begin with, for
any $x \in S_m$, write
$\widetilde{L_n}(x,p):=\|x-x_{0}\|^2_{n,qw/p}$ and
$\widetilde{L}(x):= \|x-x_{0}\|^2_{n,q}$. Recall that, for a given
$m$,
$\hat{x}_{m,p}-x_0=R_{m,p}(x_0-x_m)+(x_m-x_0)+R_{m,p}\varepsilon$.
To deal with this term, we must consider all the terms in the square
of this expression. Thus,
\begin{eqnarray*}
&&\widetilde{L}_n(\hat{x}_{m,p},p)-\widetilde{L}(\hat{x}_{m,p},p)=\frac{1}{n}\sum_{i=1}^n q(t_i)(\frac{w_i}{p_i}-1)[R_{m,p}(x_0-x_m)]^2(t_i)\\
&&+\frac{2}{n}\sum_{i=1}^n q(t_i)(\frac{w_i}{p_i}-1)R_{m,p}(x_0-x_m)(t_i)(x_m-x_0)(t_i)\\
&&+\frac{1}{n}\sum_{i=1}^n q(t_i)(\frac{w_i}{p(t_i)}-1)(x_m-x_0)^2(t_i)\\
&&+\frac{1}{n}\sum_{i=1}^n q(t_i)(\frac{w_i}{p(t_i)}-1)[R_{m,p}\varepsilon]_i^2\\
&&+\frac{2}{n}\sum_{i=1}^n q(t_i)(\frac{w_i}{p(t_i)}-1)[R_{m,p}\varepsilon]_i[R_{m,p}(x_0-x_m)](t_i)\\
&&+\frac{2}{n}\sum_{i=1}^n q(t_i)(\frac{w_i}{p(t_i)}-1)[R_{m,p}\varepsilon]_i (x_0-x_m)(t_i)\\
&&=\mathbf{I}_a +\mathbf{I}_b +\mathbf{I}_c+\mathbf{I}_d+\mathbf{I}_e+\mathbf{I}_f
\end{eqnarray*}
Start with $\mathbf{I}_a$.

Write
\begin{eqnarray*}
&&\|R_{m,p}(x_0-x_m)(t_i)\|_{n,qw/p}^2-\|R_{m,p}(x_0-x_m)(t_i)\|_{n,q}^2\\
&&=\frac{1}{n}\sum_{i=1}^n q(t_i)(\frac{w_i}{p(t_i)}-1)[R_{m,p}(x_0-x_m)]^2(t_i).
\end{eqnarray*}
 Note that
\begin{eqnarray*}
 &&\|[R_{m,p}-\esp{}{R_{m,p}}](x_0-x_m)(t_i)\|_{n,qw/p}^2-\|[R_{m,p}-\esp{}{R_{m,p}}](x_0-x_m)(t_i)\|_{n,q}^2\\
&&\leq \frac{1} {p_{min}} \|[R_{m,p}-\esp{}{R_{m,p}}]\| \left(\|(x_0-x_m)(t_i)\|_{n,q}^2\right),
\end{eqnarray*}
Whence from  the choice of $pen_1$, using  Lemma \ref{lema03} and   summing over $m$ we obtain

\begin{eqnarray} &&\nonumber P(\sup_{m}\sup_p[\|[R_{m,p}-\esp{}{R_{m,p}}](x_0-x_m)(t_i)\|_{n,qw/p}^2\\
\nonumber&&-\|[R_{m,p}-\esp{}{R_{m,p}}](x_0-x_m)(t_i)\|_{n,q}^2-pen_{1}(m,p,\delta)] )\\
&& \label{cota32} <\delta/6.
\end{eqnarray}
We then use Lemma \ref{lema3--1} to bound $\|(\esp{}{R_{m,p}}-R_m)[x_0-x_m]\|_{n,q}= (c+1)n^{-1-\alpha}e_m/p_{min}$ and achieve the bound of the term $\mathbf{I}_a$.

For the term $\mathbf{I}_b$ we start by remarking that
$$\sum_{i=1}^n q(t_i)\frac{w_i}{p(t_i)}R_{m,p}(x_0-x_m)(t_i)(x_m-x_0)(t_i)=0$$ by orthogonality. The term
$$\sum_{i=1}^n q(t_i)R_{m,p}(x_0-x_m)(t_i)(x_m-x_0)(t_i)$$ is then bounded
$$[\sum_{i=1}^n q(t_i)R_{m,p}(x_0-x_m)(t_i)(x_m-x_0)(t_i)]^2\le \gamma\|x_0-x_m\|_{n,q}^2+\frac{1}{\gamma}\|R_{m,p}[x_0-x_m]\|_{n,q}^2$$ and the proof follows as for $\mathbf{I}_a$.

 For $\mathbf{I}_c$, the proof follows from Lemma \ref{lemanorm} below,
  \begin{equation}\label{cota33}
  P(\sup_{m}\sup_p\{\frac{1}{n}\sum_i q(t_i)(\frac{w_i(p)}{p(t_i)}-1) (x_m-x_0)^2(t_i)-p_0(m,p,\delta)\}>0)<\delta/6.
  \end{equation}

  For $\mathbf{I}_d$, Lemma \ref{lema1} implies that
  \begin{equation}\label{cota34}
  P(\sup_m\sup_p\{\frac{1}{n}\sum_i q(t_i)(\frac{w_i(p)}{p(t_i)}-1)[R_{m,p}\varepsilon]_i^2  -\frac{1}{p_{\min}^2}p_2(m,p,\delta)\}>0)<\delta/6.
  \end{equation}
  The term $\mathbf{I}_e$ follows exactly as for $\Delta_1$. Finally, as for $\mathbf{I}_b$, by orthogonality we only have to bound the term
  $$[\sum_{i=1}^n q(t_i)[R_{m,p}\varepsilon]_i(x_m-x_0)(t_i)]^2$$ whose proof then follows exactly as for $\mathbf{I}_d$.

The proof then follows  by gathering the bounds in (\ref{cota30}),  (\ref{cota32}),
(\ref{cota33}) and (\ref{cota34}).
\end{proof}

 \begin{lema}\label{lemanorm} Assume that there exists a positive constant $C$ such that  $
\sup_{m} e_m\le C$ with $e_m=\|x_0-x_m\|_{\infty}$. Assume that the condition  [AQ] is satisfied. Let $\mathcal{P}:=\{p_{k}, k\ge 1 \}$
be a numerable collection of   $[0,1]$ valued functions and set $p_{k,min}=\min_i p_{k}(t_{i})$. Assume  that $\min_{k}p_{k,min}>p_{\min}$. Assume $pen_0$ to be selected according to  (\ref{pen0}).
Then,
 $$ P(\sup_m\sup_p \{\|x_0-x_m\|_{n,qw/p}^2-\|x_0-x_m\|_{n,q}^2 - pen_0(m,p,\delta)\}>0)<\delta/6$$
\end{lema}
\begin{proof}

Note that

\begin{eqnarray*}
&&\|x_0-x_m\|_{n,qw/p}^2-\|x_0-x_m\|_{n,q}^2 \\
&&=\frac{1}{n}\sum_{i=1}^n
q(t_i)(\frac{w_i}{p_i}-1)(x_0-x_m)^2(t_i).
\end{eqnarray*}
Let $p^*$ attain the supremum of this expression, so that
\begin{eqnarray*}
&&\|x_0-x_m\|_{n,qw/p^*}^2-\|x_0-x_m\|_{n,q}^2 \\
&&=\sup_p\left\{\|x_0-x_m\|_{n,qw/p}^2-\|x_0-x_m\|_{n,q}^2\right\}.
\end{eqnarray*}
Since $x-x_0$ is not random and is uniformly bounded
\begin{eqnarray*}
&&\esp{}{\sup_p\frac{1}{n}\sum_i q(t_i)(\frac{w_i(p)}{p_i}-1) (x_0-x_m)^2(t_i)}\\
&&=\frac{1}{n}\sum_i q(t_i)(x_0-x_m)^2(t_i)\esp{}{(\frac{w_i(p*)}{p^*_i}-1) }=0,
\end{eqnarray*}
Whence from  the choice of $pen_0$ in (\ref{pen0}), using  the bounded differences inequality  \citep{essaimsurvey},   we have
$$ P(\sup_m\sup_p\{ \|x_0-x_m\|_{n,qw/p}^2-\|x_0-x_m\|_{n,q}^2 - pen_0(m,p,\delta)\}>0)\leq\sum_m \frac{\delta}{6m(m+1)},$$  which yields the desired result.
\end{proof}

\begin{proof} of  Theorem
  \ref{teoiterativo}.

   The proof   is based on the following preliminary Lemma. 
  \begin{lema}\label{lemaitera} For any $\delta>0$, with probability at least $1-\delta$, for all $j\le n-n_0$ and all $x,x'\in S_{j-1}$
  $$|L_j(x)-L_j(x')-[L(x)-L(x')]|\le \Delta_j.$$
  \end{lema}
  Set $\delta>0$ so from  lemma \ref{lemaitera} $$|L_j(x)-L_j(x')-[L(x)-L(x')]|\le \Delta_j$$ holds for
  all $0\le j\le n-n_0$, $x,x'\in S_{(j-1)\vee 0}$ with probability at least $1-\delta$. Hence, for any $x,x'\in S_{j-1}$, since $S_{j-1}\subset S_{j-2}$, from the definition of $S_{j-2}$
  \begin{eqnarray*}
 && L(x)-L(x')\le L_{j-1}(x)-L_{j-1}(x')+ \Delta_{j-1}\\
 &&\le L_{j-1}(\hat{x}_{j-1})+\Delta_{j-1}-L_{j-1}(\hat{x}_{j-1})+\Delta_{j-1}=2\Delta_{j-1}.
  \end{eqnarray*}

  On the other hand, over the chosen event with probability greater than $1-\delta$,  by the choice of $\Delta_0$  and the results in Section \ref{subsection1.3},
  $x^*\in S_0$ from the definition of $S_0$. We shall now prove by induction that over the stated  event $x^*\in S_{j}$ for $1\le j\le n-n_0$.
  Assume $x^*\in S_{j-2} $. By lemma \ref{lemaitera},
  $$L_{j-1}(x^*)-L_{j-1}(\hat{x}_{j-1})\le L(x^*)-L(\hat{x}_{j-1})+\Delta_{j-1}\le \Delta_{j-1},$$
  so that $x^*\in S_{j-1}$, which ends the proof by induction.
 
   Whence,    for all $1\le j\le n-n_0$, $L(\hat{x}_{j })\le L(x^*)+2\Delta_{j-1}$, which ends the proof of the Theorem.

  \end{proof}

  \begin{proof} of lemma \ref{lemaitera}:

  Recall $n_j=n_0+j$ with $j=0,\ldots,n-n_0$. For fixed $j$ and any $x,x'\in S_{j-1}$ we have
  \begin{eqnarray*}
  && L_{j}(x)-L_j(x')-[L(x)-L(x')]\\
  &&= \frac{1}{n_j}\sum_{i=1}^{n_j} q(t_{j_i}) (\frac{w_{j_i}}{p(t_{j_i})}-1)(x-x')(t_{j_i})(x+x'-2x_0)(t_{j_i})\\
  &&+\frac{2}{n_j}\sum_{i=1}^{n_j} q(t_{j_i}) \frac{w_{j_i}}{p(t_{j_i})}\varepsilon_i (x(t_{j_i})-x'(t_{j_i}))\\
  &&=\mathbf{I}_j+\mathbf{II}_j.
  \end{eqnarray*}
  Set $z_i=q(t_{j_i})w_i (x-x')(t_{j_i})(x+x'-2x_0)(t_{j_i})$, so that $\|z_i\|_\infty\le 2QB_j(2B_j\wedge 1)$ and $\mathbf{I}_j$ satisfies the bounded difference inequality with
  $c^2= 4Q^2B_j^2(2B_j\wedge 1)^2$.  By equation (\ref{cota52}) in Section \ref{subsection3} we have
  \begin{eqnarray*}
  && P(\mathbf{I}_j>\sqrt{\log(4(n_j)(n_j +1)/\delta)\frac{16B_j^2(2B_j\wedge
        1)^2Q^2}{n_j}}+4\sqrt{4\frac{(d_j+1)\log  n}{n_j}}) \\
    &&\le P(\mathbf{I}_j>\sqrt{\log(4(n_j)(n_j +1)/\delta)\frac{16B_j^2(2B_j\wedge
       1)^2Q^2}{n_j}}+4\esp{}{ \Omega_j})\\
  &&\le \delta/2((n_j)(n_j +1)).
  \end{eqnarray*}
\begin{sloppypar}
  Next we deal with $\mathbf{II}_j$. Set $u(t)=\frac{w_i (x-x')(t)}{p(t_i)}\frac{1}{\sqrt{Q}}$, so that $\frac{1}{n_j}\sum_{i}q(t_{j_i})u^2(t_{j_i})\le 1$. Then,
  \end{sloppypar}
  \begin{eqnarray*}
  &&|\mathbf{II}_j|\le \sup_{u\in S_{j-1}, \|u\|_{n_j,q}}\frac{\sqrt{Q}}{n_j}\sum_i q(t_{j_i}) u(t_{j_i}) \varepsilon_i \\
  &&=\sqrt{Q}\|\Pi_{S_{j-1}}\varepsilon\|_{n_j,q},
  \end{eqnarray*}
where $\Pi_{S_{j}}$ stands for the projection over $S_j$. Whence by Lemma \ref{lema1},
 \begin{eqnarray*}&&P\left(\mathbf{II}_{j}>\sqrt{\sigma^2Q[(\frac{  2(d_j+
 1)}{n_j})+\frac{\log^2(4(n_j)(n_j +1)/\delta)}{n_j}]}\right)\\
 &&\, \, \, \le \delta/2((n_j)(n_j +1)).\end{eqnarray*}
   Summing over $j$ ends the proof.

\end{proof}

\begin{proof} of lemma \ref{simplesize}:
	\begin{eqnarray*}
		(p_j(t))^2&\le & \sup_{x,x'\in S_j} \|x-x'\|_\infty^2\le 4 \sup_{x\in S_j} \|x-x^*\|_\infty^2\\
		&&\le 4\overline{r}^2 d_j \sup_{x\in S_j}L(x-x^*)\\
		&&\le 4\overline{r}^2 d_j \sup_{x\in S_j}[L(x)+L(x^*)]\\
		&&\le 4\overline{r}^2 d_j(2L(x^*)+2\Delta_j).
	\end{eqnarray*}
	\begin{sloppypar}
		The third inequality follows from  Lemma \ref{lemamassartbirge} and the fifth from the bound $\sup_{x\in S_j}L(x)\le L(x^*)+2\Delta_j$ as follows from Theorem \ref{teoiterativo}. The proof is achieved by calculating the square root of each side of the last series of inequalities and finally using that $\sqrt{a+b}\le \sqrt{a}+\sqrt{b}$.
	\end{sloppypar}
	
\end{proof}



\end{document}